\documentclass{article}
\usepackage[utf8]{inputenc}
\usepackage[english]{babel}
 
\usepackage[dvipsnames]{xcolor}
 
\usepackage[nottoc]{tocbibind}
\usepackage{amsmath}
\usepackage{cases}
\usepackage{amsfonts}
\usepackage{amssymb}
\usepackage{array}
\usepackage{mathtools}
\usepackage{tikz}
\usepackage{tikz-cd}
\usepackage{ragged2e}
\usepackage{amsthm}
\usepackage{tabularx}
\usepackage{centernot}
\usepackage{faktor}
\usepackage{rotating}
\usepackage{commath}
\usepackage{comment}
\usepackage{mathabx}
\usepackage{fge}
\usepackage[parfill]{parskip}
\usepackage[shortlabels]{enumitem}
\usepackage{xifthen}
\usepackage{etoolbox}
\usepackage{verbatim}
\usepackage{tikzducks}
\usepackage{ytableau}
\usepackage{subfig}
\usepackage{slashed}
\usepackage{xfrac}
\usepackage[title]{appendix}
\usepackage{multirow}
\usepackage{textcomp}
\usepackage[hyphens]{url}

\usepackage[margin=1.15in]{geometry}

\usepackage{pdflscape}
\usepackage{afterpage}
\usepackage{capt-of}

\usepackage{lipsum}
\usepackage{hyperref}



\hypersetup{
    colorlinks=true,
    linkcolor=BrickRed,
    filecolor=Apricot,      
    urlcolor=Rhodamine,
    citecolor=ForestGreen,
    anchorcolor=Fuchsia,
}



\DeclareFontFamily{U}{MnSymbolC}{}
\DeclareSymbolFont{MnSyC}{U}{MnSymbolC}{m}{n}
\DeclareFontShape{U}{MnSymbolC}{m}{n}{
    <-6>  MnSymbolC5
   <6-7>  MnSymbolC6
   <7-8>  MnSymbolC7
   <8-9>  MnSymbolC8
   <9-10> MnSymbolC9
  <10-12> MnSymbolC10
  <12->   MnSymbolC12}{}
\DeclareMathSymbol{\hook}{\mathbin}{MnSyC}{'270}


\newcommand{\bb}[1]{
    \mathbb{#1}
}
\renewcommand{\cal}[1]{
    \mathcal{#1}
}

\newcommand{\rarr}{
    \rightarrow
}

\renewcommand{\:}{
    \colon
}

\renewcommand{\del}[1]{
    \partial #1
}




\newcommand{\wtilde}[1]{
    \widetilde{#1}
}





\newcommand{\Ric}{
    \text{Ric}
}

\newcommand{\vol}{
    \text{\normalfont{vol}}
}
\renewcommand{\Re}{
    \text{\normalfont{Re}}
}
\renewcommand{\Im}{
    \text{\normalfont{Im}}
}
\newcommand{\id}{
    \text{\normalfont{id}}
}
\newcommand{\CY}{
    \text{\normalfont{CY}}
}







\providecommand{\customgenericname}{}
\newcommand{\newcustomtheorem}[2]{%
  \newenvironment{#1}[1]
  {%
   \renewcommand\customgenericname{#2}%
   \renewcommand\theinnercustomgeneric{##1}%
   \innercustomgeneric
  }
  {\endinnercustomgeneric}
}

\makeatletter
\providecommand{\subtitle}[1]{
  \apptocmd{\@title}{\par {\large #1 \par}}{}{}
}
\makeatother

\makeatletter
\def\mathcolor#1#{\@mathcolor{#1}}
\def\@mathcolor#1#2#3{%
  \protect\leavevmode
  \begingroup
    \color#1{#2}#3%
  \endgroup
}
\makeatother

\newcustomtheorem{customthm}{Theorem}
\newcustomtheorem{customlem}{Lemma}
\newcustomtheorem{customprop}{Proposition}


\theoremstyle{plain}
\newtheorem{thm}{Theorem}[section]
\newtheorem{lem}[thm]{Lemma}

\theoremstyle{definition}
\newtheorem{defn}[thm]{Definition}

\newtheorem{rmk}[thm]{Remark}

\theoremstyle{remark}

\title{Flows of $G_2$-Structures Associated to Calabi--Yau Manifolds}
\author{
  S\'ebastien Picard \\
\texttt{spicard@math.ubc.ca}
  \and
  Caleb Suan \\
\texttt{calebkw@math.ubc.ca}
}

\date{}

\numberwithin{equation}{section}

\begin{document}

\maketitle


\begin{abstract}
\label{abstract}
We establish a correspondence between a parabolic complex Monge--Amp\`ere equation and the $G_2$-Laplacian flow for initial data produced from a K\"ahler metric on a complex $2$- or $3$-fold. By applying estimate for the complex Monge--Amp\`ere equation, we show that for this class of initial data the $G_2$-Laplacian flow exists for all time and converges to a torsion-free $G_2$-structure induced by a K\"ahler Ricci-flat metric. Similar results are obtained for the $G_2$-Laplacian coflow, and in this case the coflow is related to the K\"{a}hler--Ricci flow.
  \end{abstract}

\tableofcontents

\section{Introduction}
\label{sect-intro}

Since the works of Eells--Sampson \cite{ES64} and Hamilton \cite{Ham82}, geometric flows have appeared in a variety of settings in differential geometry, including minimal surface theory \cite{Hui84}, Yang--Mills theory \cite{Don85}, K\"ahler geometry \cite{Cao85}, $G_2$ geometry \cite{Bry05,DGK21,Kar09,Lot20}, generalized geometry \cite{GFS21}, and string theory \cite{FPPZ21a,Pho20,PPZ18CAG,PPZ18}. In this paper, we will study the interaction between geometric flows in $G_2$ geometry and flows in K\"ahler geometry via dimension reduction. The two most prominent flows in $G_2$ geometry are the Laplacian flow \cite{Bry05} and the Laplacian coflow \cite{KMT12}. We will study these flows for initial data coming from K\"ahler geometry, and establish a link between them and the MA$^{\frac{1}{3}}$ and K\"ahler--Ricci flows.

Let $X^{2n}$ be a compact complex manifold of complex dimension $n = 2$ or $3$ with K\"ahler metric $\omega$ and holomorphic volume form $\Omega$; we take this as the definition of a Calabi--Yau $n$-fold. The setting for $G_2$ geometry is a manifold $M^7$ of dimension $7$ with a positive $3$-form $\varphi$. An example of a positive 3-form $\varphi$ comes from considering the product $M^7 = T^3 \times X^4$ and setting
\begin{equation} 
\label{eqn-varphi-flow-dim-2}
    \varphi = - dr^1 \wedge dr^2 \wedge dr^3 + dr^1 \wedge \omega + dr^2 \wedge \Re (\Omega) + dr^3 \wedge \Im (\Omega)
\end{equation}
where $r^1, r^2$, and $r^3$ are the angle coordinates on $T^3$. Alternatively, when $n = 3$, we may consider the product $M^7 = S^1 \times X^6$ and set
\begin{equation} 
\label{eqn-varphi-flow-dim-3}
    \varphi = \Re (\Omega) - dr \wedge \omega
\end{equation}
where $r$ is the angle coordinate on $S^1$. These $3$-forms define torsion-free $G_2$-structures when $\omega=\omega_{\CY}$ is taken to be a K\"ahler Ricci-flat metric, but for an arbitrary K\"ahler metric $\omega$ the forms $\varphi$ are closed but not coclosed.

The study of flows of $G_2$ structures with partial integrability conditions imposed on ans\"{a}tze of the form \eqref{eqn-varphi-flow-dim-2}, \eqref{eqn-varphi-flow-dim-3} can be found in e.g. \cite{FR20,FY18, KMT12,LSES22}. Compared to these works, we work with K\"ahler structures, and our motivating question is whether the $G_2$-Laplacian flow on $M^7$ creates a flow of K\"ahler geometry on $X^{2n}$ which takes us in the limit to a K\"ahler Ricci-flat metric. This is similar in spirit to \cite{FPPZ21a,PPZ19}, where flows in symplectic/non-K\"ahler geometry become complex Monge--Amp\`ere flows in K\"ahler geometry for integrable initial data.
  
We now make this discussion more precise, with the relevant definitions to be found in \S \ref{sect-G2-Structs}. Start the $G_2$-Laplacian flow
\begin{equation}
\label{eqn-Laplacian-flow}
    \frac{d \varphi_t}{dt} = \Delta_{d_t} \varphi_t
\end{equation}
on $M^7 = T^3 \times X^4$ with initial data \eqref{eqn-varphi-flow-dim-2} or on $M^7 = S^1 \times X^6$ with initial data \eqref{eqn-varphi-flow-dim-3}. We will show that with this initial condition, the $G_2$-Laplacian flow on $M^7$ is equivalent to a complex Monge--Amp\`ere flow on $X^{2n}$, in the sense that the solution evolves as
\begin{equation}
\label{eqn-ansatz-varphi-flow}
    \varphi_t = 
    \begin{cases}
        - dr^1 \wedge dr^2 \wedge dr^3 + dr^1 \wedge \omega_t \\
        \qquad + dr^2 \wedge \Re (\Omega_t) + dr^3 \wedge \Im (\Omega_t) &\text{ on } T^3 \times X^4, \\[5pt]
        \Re (\Omega_t) - dr \wedge \omega_t &\text{ on } S^1 \times X^6,
    \end{cases}
\end{equation}
with $(\omega_t,\Omega_t) = (\Theta_t^* \wtilde{\omega}_t, \Theta_t^* \Omega)$, where $\wtilde{\omega}_t = \omega + i\partial \overline{\partial} u_t > 0 $, and $u_t$ solves
\begin{equation}
\label{eqn-MA1/3-flow}
    \frac{du_t}{dt} = 6K \Big(e^{-2 \log |\Omega|_{\omega}} \frac{\det (\omega + i \partial \overline{\partial} u_t)}{\det \omega} \Big)^\frac{1}{3}, \quad u_0 = 0,
\end{equation}
on $X^{2n}$ for the constant $K = 2^{\frac{n}{3}}$ and $\Theta_t$ is a family of diffeomorphisms determined by $\wtilde{\omega}_t$ with $\Theta_0= \id_{X^{2n}}$ (see \eqref{eqn-Theta_t-flow} for the definition of $\Theta_t$). 

\begin{rmk}
  The complex Monge--Amp\`ere flow \eqref{eqn-MA1/3-flow} is not the K\"ahler-Ricci flow \cite{Cao85}. This nonlinear equation with the exponent $\frac{1}{3}$ replaced by $-1$ has appeared in various setups \cite{CK13,CHT22,FLM11,FPPZ21a,FP21}, while the exponent $+1$ was derived from the Type IIB flow in \cite{PPZ19}. We will refer to equation \eqref{eqn-MA1/3-flow} as the MA$^{\frac{1}{3}}$ flow.
\end{rmk}

\begin{rmk}
  The power of $\frac{1}{3}$ is interesting from the point of view of fully nonlinear PDEs, as the general theory \cite{CNS3,Guan14,PhongTo,Sze18} assumes a concave operator, but $A \mapsto (\det A)^\frac{1}{3}$ on positive-definite matrices $A$ is not concave except for $3 \times 3$ matrices.
\end{rmk}

The Monge--Amp\`ere equation on a compact complex manifold with zero first Chern class was solved by S.-T. Yau in \cite{Yau78}. Since then, the analysis of the complex Monge--Amp\`ere equation has been pursued in various directions, including e.g. \cite{BT82,BBGZ13,EGZ09,Kolo98,PSS12,TY90,TW10}. A theory of parabolic complex Monge--Amp\`ere equations that are not necessarily concave was developed in joint work of the first-named author with X.-W. Zhang \cite{PZ19} (generalizing estimates obtained in previous work with D.H. Phong \cite{PPZ19}). The MA$^{\frac{1}{3}}$ flow \eqref{eqn-MA1/3-flow} fits into that framework, and as an application of the complex Monge--Amp\`ere estimates in \cite{PZ19} we obtain the following theorem on the $G_2$-Laplacian flow.

\begin{thm}
\label{thm-Laplacian-flow-MA1/3}
Let $X^{2n}$ be a compact K\"ahler Calabi--Yau $n$-fold where $n = 2$ or $3$ with K\"ahler metric $\omega$ and holomorphic volume form $\Omega$. Start the $G_2$-Laplacian flow with initial data
\[
\varphi =  - dr^1 \wedge dr^2 \wedge dr^3 + dr^1 \wedge \omega + dr^2 \wedge \Re (\Omega) + dr^3 \wedge \Im (\Omega) \quad \text{ on } T^3 \times X^4 
\]
or
\[
\varphi = \Re (\Omega) - dr \wedge \omega  \quad \text{on } S^1 \times X^6.
\]
In either case, the $G_2$-Laplacian flow exists for all time and converges as $t \rightarrow \infty$ to a stationary point $\varphi_\infty$ of the form
\begin{equation}
\label{eqn-varphi-flow-stationary}
    \varphi_{\infty} = 
    \begin{cases}
        - dr^1 \wedge dr^2 \wedge dr^3 + dr^1 \wedge \Theta_\infty^* \omega_{{\CY}} \\
        \qquad + dr^2 \wedge \Re (\Theta_\infty^*\Omega) + dr^3 \wedge \Im (\Theta_\infty^*\Omega) &\text{ on } T^3 \times X^4 \\[5pt]
        \Re( \Theta_\infty^*\Omega) - dr \wedge \Theta_\infty^* \omega_{{\CY}} &\text{ on } S^1 \times X^6,
    \end{cases}
\end{equation}
where $\Theta_\infty: X^{2n} \rightarrow X^{2n}$ is a diffeomorphism and $\omega_{\CY}$ is the unique K\"ahler Ricci-flat metric in the K\"{a}hler class $[\omega]$.
\end{thm}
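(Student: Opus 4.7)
The plan is to reduce the $G_2$-Laplacian flow with the stated initial data to the MA$^{\frac{1}{3}}$ flow \eqref{eqn-MA1/3-flow} on $X^{2n}$, and then apply the parabolic complex Monge-Amp\`ere estimates of \cite{PZ19} to deduce long-time existence and convergence.

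First I would establish the equivalence with \eqref{eqn-MA1/3-flow}. Writing down an ansatz of the form \eqref{eqn-ansatz-varphi-flow} with $(\omega_t,\Omega_t) = (\Theta_t^*\tilde\omega_t, \Theta_t^*\Omega)$, one computes $\Delta_{d_t}\varphi_t$ explicitly using the splitting of the induced $G_2$-metric as a warped product over $T^3$ (resp. $S^1$) with warping factors determined by $|\Omega|_{\omega_t}$. Since $\omega$ and $\Omega$ are closed, $\varphi_t$ is closed and $\Delta_{d_t}\varphi_t = dd^*_t\varphi_t$; a direct calculation shows that the coefficients of $dr^i$ in $\Delta_{d_t}\varphi_t$ reproduce $\partial_t$ of the corresponding coefficients in the ansatz, provided $u_t$ satisfies \eqref{eqn-MA1/3-flow} and $\Theta_t$ is chosen to absorb any non-$i\partial\bar\partial$-exact terms as in \eqref{eqn-Theta_t-flow}. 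This reduces \eqref{eqn-Laplacian-flow} to the scalar PDE \eqref{eqn-MA1/3-flow} for $u_t$.

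Next I would apply the framework of \cite{PZ19}. The flow \eqref{eqn-MA1/3-flow} is a parabolic complex Monge-Amp\`ere equation of the type analyzed there: despite the non-concavity of $A \mapsto (\det A)^{1/3}$ for $n = 2$, the estimates of \cite{PZ19} still yield uniform $C^0$, $C^2$, and higher-order a priori bounds for $u_t$ along the flow, hence long-time existence together with subsequential smooth convergence $u_t \to u_\infty$ up to normalization. The limit $\tilde\omega_\infty = \omega + i\partial\overline{\partial} u_\infty$ satisfies $\tilde\omega_\infty^n = e^{2\log|\Omega|_\omega}\, \omega^n$, which is precisely the Monge-Amp\`ere equation solved by Yau \cite{Yau78}; its unique K\"ahler solution in the class $[\omega]$ is the Ricci-flat metric $\omega_{\CY}$. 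Uniqueness of the limit together with a monotonicity argument along the flow upgrades subsequential convergence to smooth convergence $\tilde\omega_t \to \omega_{\CY}$.

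Finally, to obtain convergence $\varphi_t \to \varphi_\infty$ of the stated form \eqref{eqn-varphi-flow-stationary}, I would show that the diffeomorphism family $\Theta_t$, generated by a time-dependent vector field determined by $\tilde\omega_t$ as in \eqref{eqn-Theta_t-flow}, converges to a limit diffeomorphism $\Theta_\infty$. Since $\tilde\omega_t \to \omega_{\CY}$ smoothly and the generating vector field decays sufficiently in $t$, $\Theta_t$ converges in the smooth topology; combined with the previous step, this gives $\varphi_t \to \varphi_\infty$ of the claimed form. The main obstacle is the first step: one must carefully verify that the Laplacian flow of the Calabi--Yau ansatz reduces exactly to the $\frac{1}{3}$-power of a Monge-Amp\`ere operator, since the specific exponent is what makes the estimates of \cite{PZ19} applicable, and the non-concavity of this operator means the underlying analytic estimates are delicate and cannot be handled by classical Caffarelli--Nirenberg--Spruck-type theory.
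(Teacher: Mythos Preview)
Your outline matches the paper's approach: compute $\Delta_d\varphi$ on the ansatz to obtain Lie-derivative evolution equations for $(\omega_t,\Omega_t)$, solve these via $(\Theta_t^*\tilde\omega_t,\Theta_t^*\Omega)$ where $\tilde\omega_t$ comes from the MA$^{1/3}$ flow, invoke \cite{PZ19} for long-time existence and convergence $\tilde\omega_t\to\omega_{\CY}$, and then pass to the limit in $\Theta_t$.

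Two points you gloss over that the paper treats with care. First, to conclude that \emph{the} Laplacian flow (not merely \emph{a} solution) is given by the ansatz, you must invoke uniqueness of the closed Laplacian flow \cite{BX04}; otherwise you have only constructed one solution and cannot assert the flow preserves the form \eqref{eqn-ansatz-varphi-flow}. Second, your claim that $\Theta_t$ converges to a \emph{diffeomorphism} $\Theta_\infty$ needs justification beyond ``the generating vector field decays sufficiently.'' The paper uses the \emph{exponential} decay $|\nabla^k Y_t|\le C_ke^{-\lambda_k t}$ (which follows from the exponential convergence of the Monge--Amp\`ere flow in \cite{PZ19}, not merely from smooth convergence) to show $\Theta_t$ is Cauchy in each $C^k$; the limit is then a priori only a smooth map, and the paper establishes it is a diffeomorphism by bounding $\Theta_t^*(\Omega\wedge\overline\Omega)$ uniformly away from zero (hence $\det(\Theta_\infty)_*\neq 0$), applying the inverse function theorem, and using a degree argument (homotopy to the identity) to upgrade the resulting surjective local diffeomorphism to a global one. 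Your ``subsequential convergence plus monotonicity'' route for $\tilde\omega_t$ would also work, but the paper's direct exponential estimate is what feeds cleanly into the $\Theta_t$ argument.
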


A similar analysis can be done when starting with the $G_2$-Laplacian coflow
\begin{equation}
\label{eqn-Laplacian-coflow}
    \frac{d\psi_t}{dt} = {\Delta_d}_t \psi_t
\end{equation}
on $M^7 = T^3 \times X^4$ with initial data
\begin{equation}
\label{eqn-varphi-coflow-dim-2}
\psi = - 2^{-\frac{4}{3}} \frac{1}{2} \omega^2 + 2^{-\frac{4}{3}} dr^2 \wedge dr^3 \wedge \omega + 2^{\frac{2}{3}} dr^3 \wedge dr^1 \wedge \Re (\Omega) + 2^{\frac{2}{3}} dr^1 \wedge dr^2 \wedge \Im (\Omega),
\end{equation}
or on $M^7 = S^1 \times X^6$ with initial data
\begin{equation}
\label{eqn-varphi-coflow-dim-3}
 \psi = - 2dr \wedge \Im (\Omega) - \frac{1}{4} \frac{1}{2} \omega^2.
\end{equation}
From $\psi$, we recover $\varphi$ by $\psi = \star \varphi$ on $M^7$. This ansatz for $\varphi$ is natural for the coflow as $\varphi$ is coclosed ($d \psi=0$) but not closed.

In this case, it can be shown that a solution to the $G_2$-Laplacian coflow on $M^7$ is generated by the K\"{a}hler--Ricci flow on $X^{2n}$. With this set up, we will show that a solution for the 3-form $\varphi_t$ is given by
\begin{equation}
\label{eqn-ansatz-varphi-coflow}
    \varphi_t = 
    \begin{cases}
        - |\Omega_t|_{\omega_t} dr^1 \wedge dr^2 \wedge dr^3 + dr^1 \wedge |\Omega_t|_{\omega_t} \omega_t \\
        \qquad + dr^2 \wedge \Re \Big( \frac{1}{|\Omega_t|_{\omega_t}} \Omega_t \Big) + dr^3 \wedge \Im \Big( \frac{1}{|\Omega_t|_{\omega_t}} \Omega_t \Big) &\text{ on } T^3 \times X^4, \\[5pt]
        \Re \Big( \frac{1}{|\Omega_t|_{\omega_t}} \Omega_t \Big) - dr \wedge |\Omega_t|_{\omega_t} \omega_t &\text{ on } S^1 \times X^6,
    \end{cases}
\end{equation}
with $(\omega_t,\Omega_t) = (\Theta_t^* \wtilde{\omega}_t, \Theta_t^* \Omega)$ where $\wtilde{\omega}_t$ solves the (rescaled) K\"{a}hler--Ricci flow
\begin{equation}
\label{eqn-KR-flow}
    \frac{d\wtilde{\omega_t}}{dt} = -2K\Ric(\wtilde{\omega}_t,J), \quad \wtilde{\omega}_0 = \omega,
\end{equation}
on $X^{2n}$ with $K = 2^{\frac{n}{3}}$ and $\Theta_t$ is again a family of diffeomorphisms determined by $\wtilde{\omega}_t$ with $\Theta_0 = \id_{X^{2n}}$.

\begin{rmk}
It is intriguing that the solution to the $G_2$-Laplacian flow is generated by the MA$^{\frac{1}{3}}$ flow on the complex manifold, while the solution to the coflow is generated by the more well-known K\"ahler--Ricci flow on the complex manifold. From the point of view of Monge--Amp\`ere equations, the MA$^{\frac{1}{3}}$ flow involves the determinant to the power of $\frac{1}{3}$, while the K\"ahler-Ricci flow involves the logarithm of the determinant. 
\end{rmk}

Known results \cite{Cao85} regarding the K\"{a}hler--Ricci flow allow us to conclude the following:

\begin{thm}
\label{thm-Laplacian-coflow-KR}
Let $X^{2n}$ be a compact K\"ahler Calabi--Yau $n$-fold where $n = 2$ or $3$ with K\"ahler metric $\omega$ and holomorphic volume form $\Omega$. Start the $G_2$-Laplacian coflow with initial data
\begin{equation}
\label{eqn-psi-coflow-dim-2}
\psi = - 2^{-\frac{4}{3}} \frac{1}{2} \omega^2 + 2^{-\frac{4}{3}} dr^2 \wedge dr^3 \wedge \omega + 2^{\frac{2}{3}} dr^3 \wedge dr^1 \wedge \Re (\Omega) + 2^{\frac{2}{3}} dr^1 \wedge dr^2 \wedge \Im (\Omega) \text{ on } T^3 \times X^4,
\end{equation}
or
\begin{equation}
\label{eqn-psi-coflow-dim-3}
 \psi = - 2 dr \wedge \Im (\Omega) - \frac{1}{4} \frac{1}{2} \omega^2 \text{ on } S^1 \times X^6
\end{equation}
Then a solution to the $G_2$-Laplacian coflow exists for all time and converges as $t \rightarrow \infty$ to a stationary point $\psi_\infty$ of the form
\begin{equation}
\label{eqn-varphi-coflow-stationary}
    \psi_{\infty} = 
    \begin{cases}
        - 2^{-\frac{4}{3}} \frac{1}{2} \Theta_\infty^* \omega^2_{\CY} + 2^{-\frac{4}{3}} dr^2 \wedge dr^3 \wedge \Theta_\infty^* \omega_{\CY} \\
        \qquad + 2^{\frac{2}{3}} dr^3 \wedge dr^1 \wedge \Re (\Theta_\infty^*\Omega) + 2^{\frac{2}{3}} dr^1 \wedge dr^2 \wedge \Im (\Theta_\infty^*\Omega)  &\text{ on } T^3 \times X^4, \\[5pt]
       - 2dr \wedge \Im (\Theta_\infty^*\Omega) - \frac{1}{4} \frac{1}{2} \Theta_\infty^*\omega_{\CY}^2  &\text{ on } S^1 \times X^6,
    \end{cases}
\end{equation}
where $\Theta_\infty: X^{2n} \rightarrow X^{2n}$ is a diffeomorphism and $\omega_{\CY}$ is the unique K\"ahler Ricci-flat metric in the K\"ahler class $[\omega]$.
\end{thm}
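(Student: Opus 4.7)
The plan is to reduce the $G_2$-Laplacian coflow \eqref{eqn-Laplacian-coflow} on $M^7$ to the K\"ahler-Ricci flow \eqref{eqn-KR-flow} on $X^{2n}$ via the ansatz \eqref{eqn-ansatz-varphi-coflow}, and then to invoke Cao's theorem \cite{Cao85}. Concretely, let $\wtilde{\omega}_t$ solve \eqref{eqn-KR-flow} on $X^{2n}$, and let $\Theta_t$ be the one-parameter family of diffeomorphisms of $X^{2n}$ generated by a vector field $V_t$ (to be determined below) with $\Theta_0 = \id_{X^{2n}}$. Setting $(\omega_t,\Omega_t) = (\Theta_t^* \wtilde{\omega}_t, \Theta_t^*\Omega)$ and defining $\varphi_t$ by \eqref{eqn-ansatz-varphi-coflow}, the goal is to verify that $\psi_t := \star_{\varphi_t}\varphi_t$ solves \eqref{eqn-Laplacian-coflow}.

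The main computation is to differentiate both sides of $\frac{d\psi_t}{dt} = \Delta_{d_t}\psi_t$ under the ansatz. The time derivative of $\psi_t$ has three contributions: the evolution of $\wtilde{\omega}_t$ by $-\Ric(\wtilde{\omega}_t,J)$, the evolution of the pointwise norm $|\Omega|_{\wtilde{\omega}_t}$ (computable from $\partial_t \log\det\wtilde{\omega}_t = -\Delta\log\det\wtilde{\omega}_t$), and the Lie derivative $\cal{L}_{V_t}$ along $V_t$. For the spatial side, the product structure $M^7 = T^k \times X^{2n}$ decomposes the exterior derivative into flat $T^k$- and $X^{2n}$-pieces, so $\Delta_d$ applied to each building block of $\psi_t$ reduces to $\Delta_d$ acting on $\omega_t$, $\Re\Omega_t$, $\Im\Omega_t$, and $\omega_t\wedge\omega_t$ on $X^{2n}$. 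The crucial identity is the K\"ahler Bochner formula $\Delta_d \omega = -2\Ric(\omega,J)$ in suitable conventions, which produces exactly the Ricci term on the right-hand side of \eqref{eqn-KR-flow}. The remaining mismatched pieces, coming from the action of $\Delta_d$ on the $\Omega_t$-terms together with $\partial_t |\Omega_t|_{\omega_t}$, are then matched by choosing $V_t$ to be an explicit gradient-type vector field built out of $\wtilde{\omega}_t$ and $|\Omega|_{\wtilde{\omega}_t}$.

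Once the reduction is established, long-time existence follows from Cao's theorem \cite{Cao85}, which guarantees that \eqref{eqn-KR-flow} exists for all $t\geq 0$ and converges smoothly to the unique K\"ahler Ricci-flat metric $\omega_{\CY}$ in $[\omega]$. Smooth exponential decay $\wtilde{\omega}_t \to \omega_{\CY}$ gives uniform bounds on $V_t$ and the time-integrability needed to conclude that $\Theta_t \to \Theta_\infty$ in the diffeomorphism group of $X^{2n}$. Since $\omega_{\CY}$ is Ricci-flat, $|\Omega|_{\omega_{\CY}}$ is constant (equal to $1$ after normalization), so the factors $|\Omega_t|_{\omega_t}$ in \eqref{eqn-ansatz-varphi-coflow} become $1$ in the limit, and the limiting $\psi_\infty = \star_{\varphi_\infty}\varphi_\infty$ takes the stated form \eqref{eqn-varphi-coflow-stationary}.

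The main obstacle will be the bookkeeping in the coflow computation, especially in the $n=2$ case where $\psi$ has four distinct terms on $T^3 \times X^4$ mixing the three circle coordinates with $\omega$, $\Re\Omega$, $\Im\Omega$. One must verify that the Laplacian of each mixed term produces a consistent set of corrections which, combined with the divergence-type contributions from $\Delta_d\Re\Omega_t$, $\Delta_d\Im\Omega_t$, and $\Delta_d\omega_t^2$, reproduce the time derivative. The normalization factor $|\Omega_t|_{\omega_t}$ in the ansatz is precisely what makes this consistency possible: it encodes the deformation of the $G_2$-volume form under the underlying K\"ahler-Ricci flow, and dropping it would cause the ansatz to fail to solve \eqref{eqn-Laplacian-coflow}. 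Verifying that the explicit $V_t$ produced by the matching argument generates a genuine family of diffeomorphisms, and that $\Theta_t$ converges (not merely subsequentially), requires the smooth exponential convergence in Cao's theorem rather than mere uniform convergence.
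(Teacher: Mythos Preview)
Your overall architecture is exactly the paper's: reduce the coflow to the K\"ahler--Ricci flow on $X^{2n}$ via a DeTurck-type family of diffeomorphisms $\Theta_t$, invoke Cao for long-time existence and smooth convergence to $\omega_{\CY}$, and then use exponential decay of the generating vector field to prove $\Theta_t \to \Theta_\infty$. Where you go wrong is in the mechanism by which the Ricci term appears in $\Delta_{d_t}\psi_t$.

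The metric $g_7$ induced by $\varphi_t$ is \emph{not} a Riemannian product: in the coflow ansatz one has $g_7 = |\Omega|_\omega^2 (dr^1)^2 + (dr^2)^2 + (dr^3)^2 + g_4$ on $T^3 \times X^4$ and $g_7 = |\Omega|_\omega^2 dr^2 + g_6$ on $S^1 \times X^6$. Hence $\star_7$ carries factors of $|\Omega|_\omega$, and $\Delta_d$ on $M^7$ does \emph{not} reduce to $\Delta_d$ on $X^{2n}$ acting on the building blocks. In fact on a K\"ahler manifold $\omega$, $\Re\Omega$, $\Im\Omega$ and $\omega^2$ are all harmonic, so the identity ``$\Delta_d\omega = -2\Ric(\omega,J)$'' you invoke is false; if your decomposition were valid, you would get $\Delta_d\psi = 0$ for every K\"ahler metric, which is clearly wrong.

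The correct computation (Lemmas \ref{lem-Hodge-Laplacian-psi-coflow-dim-2} and \ref{lem-Hodge-Laplacian-psi-coflow-dim-3} in the paper) uses that $\psi$ is closed, so $\Delta_d\psi = d\star_7 d\varphi$. The derivative $d\varphi$ comes entirely from differentiating the warping factors $|\Omega|_\omega$ present in $\varphi$; applying the explicit warped $\star_7$ and Cartan's formula yields
\[
\Delta_d\psi = \cal{L}_{\nabla_h(\log|\Omega|_\omega)}\big(\text{explicit closed form built from }\omega,\Omega\big).
\]
The Ricci curvature then enters not through a Bochner formula but through the K\"ahler identity
\[
\cal{L}_{\nabla_h(\log|\Omega|_\omega)}\omega = i\partial\overline{\partial}\log|\Omega|_\omega = \tfrac{1}{2}\Ric(\omega,J),
\]
which is what links the coupled system \eqref{eqn-ansatz-omega-coflow}--\eqref{eqn-ansatz-Omega-coflow} to the K\"ahler--Ricci flow after the DeTurck substitution with $Y_t = \nabla_{\wtilde{h}_t}\log|\Omega|_{\wtilde{\omega}_t}$. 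Once you replace your Laplacian computation with this one, the rest of your outline (Cao, exponential decay, convergence of $\Theta_t$) goes through as you describe and matches the paper's \S\ref{sect-convergence}.
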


In summary, both the $G_2$-Laplacian flow and coflow on $T^3 \times X^4$ or $S^1 \times X^6$ take an arbitrary K\"ahler metric $\omega$ and create a path $\omega_t$ on $X$ converging to the unique K\"ahler Ricci-flat metric $\omega_{\CY} \in [\omega]$ by the ansatz given above. In this way, we see that $G_2$ flows can be used to give a $G_2$ proof of Yau's theorem \cite{Yau78} on Calabi--Yau $2$ and $3$-folds.

The Einstein summation convention will be employed throughout.

\par {\bf Acknowledgements:} We thank S. Karigiannis for helpful discussions. S.P. thanks T. Fei, D.H. Phong, and X.-W. Zhang for previous collaborations on geometric flows in special geometries. We also thank the referee for helpful comments and a careful reading.

\section{Preliminaries on \texorpdfstring{$G_2$}{G2}-Structures}
\label{sect-G2-prelims}

\subsection{Review of \texorpdfstring{$G_2$}{G2}-Structures}
\label{sect-G2-Structs}

In this section, we review some facts from $G_2$ geometry. For a more in depth reference, see \cite{Kar20}.

We start with a definition.

\begin{defn}
\label{defn-G2-Struct}
A $3$-form on a $7$-dimensional manifold $M$ is called positive if for any non-zero $Y \in T_pM$,
\begin{equation}
\label{eqn-pos-3-form}
    (Y \hook \varphi) \wedge (Y \hook \varphi) \wedge \varphi \neq 0.
\end{equation}
\end{defn}

A smooth positive $3$-form is also called a $G_2$-structure. In this case, $\varphi$ induces a unique metric $g$ and volume form $\vol$ by the relation:
\begin{equation}
\label{eqn-G2-metric-vol}
    -\frac{1}{6} (Y \hook \varphi) \wedge (Z \hook \varphi) \wedge \varphi = g(Y,Z) \vol.
\end{equation}

The volume form $\vol$ in \eqref{eqn-G2-metric-vol} is the Riemannian volume form of the induced metric $g$.

\begin{rmk}
\label{rmk-sign}
We remark that some authors choose a different sign convention (and orientation) by flipping the sign in \eqref{eqn-G2-metric-vol}.
\end{rmk}

With these induced structures, we get a Hodge star operator $\star$ and a $4$-form $\psi$ dual to $\varphi$ given by $\psi = \star \varphi$.

We note here that $7$-dimensional manifold admits a $G_2$-structure if and only if it is orientable and spinnable.

\begin{defn}
\label{defn-closed-coclosed}
A $G_2$-structure $\varphi$ is called:
\begin{itemize}
    \item closed if $d\varphi = 0$,
    \item coclosed if $d\psi = 0$,
    \item torsion-free if it is both closed and coclosed.
\end{itemize}
\end{defn}

Under the action of $G_2$, we can decompose the spaces of forms on $M$. More details on these decompositions can be found in \cite{Kar09}. In particular, we can apply them to the $4$-form $d\varphi$ and $5$-form $d\psi$, which in turn allows us to define the torsion forms.

\begin{defn}
\label{defn-torsion}
The torsion forms of a $G_2$-structure $\varphi$ are 
\begin{equation*}
    \tau_0 \in \Omega^0_1, \quad \tau_1 \in \Omega^1_7, \quad \tau_2 \in \Omega^2_{14}, \quad \tau_3 \in \Omega^3_{27},
\end{equation*}
and are defined by the equations
\begin{equation}
\label{eqn-torsion}
\begin{aligned}
    d\varphi &= \tau_0 \psi + 3 \tau_1 \wedge \varphi + \star \tau_3, \\
    d\psi &= 4 \tau_1 \wedge \psi + \star \tau_2.
\end{aligned}
\end{equation}
\end{defn}

The torsion forms can be computed using the following identities:
\begin{equation}
\label{eqn-torsion-id}
\begin{aligned}
    \tau_0 &= \frac{1}{7} \star (\varphi \wedge d\varphi), \\
    \tau_1 &= \frac{1}{12} \star (\varphi \wedge \star d\varphi) = \frac{1}{12} \star (\psi \wedge \star d\psi).
\end{aligned}
\end{equation}

It can be shown that the $\tau_1$ that appears in both equations \eqref{eqn-torsion} are indeed the same. A proof of this can be found in \cite{Kar09}.

\subsection{Calabi--Yau \texorpdfstring{$2$}{2}-Folds and their Associated \texorpdfstring{$G_2$}{G2}-Structures}
\label{subsect-CY-2-Folds}

We now briefly describe a manner in which we can obtain a $G_2$-structure from a smooth Calabi--Yau $2$-fold $X^4$ by crossing it with the $3$-torus $T^3$. Let $X^4$ be a compact smooth Calabi--Yau $2$-fold. Let $\omega$ be a K\"{a}hler form and $\Omega$ be a nowhere vanishing holomorphic $(2,0)$-form. Both $\omega$ and $\Omega$ are closed. In local coordinates we may write
\begin{equation}
\label{eqn-omega-local-dim-2}
    \omega = i {(g_4)}_{p\overline{q}} dz^p \wedge d\overline{z}^q,
\end{equation}
and
\begin{equation}
\label{eqn-Omega-local-dim-2}
    \Omega = f dz^1 \wedge dz^2
\end{equation}
where $g_4 = {(g_4)}_{p\overline{q}}$ is the metric associated to $\omega$ and $f$ is a local holomorphic function. We have $\omega(\cdot,\cdot)=g_4(J \cdot, \cdot)$. The norm of $\Omega$ with respect to $\omega$ can be computed by the formula
\begin{equation}
\label{eqn-norm-Omega-dim-2}
    |\Omega|_\omega^2 = \frac{|f|^2}{\det {(g_4)}_{p\overline{q}}}.
\end{equation}
If the K\"{a}hler metric $\omega$ is Ricci-flat \cite{Yau78}, this norm is a constant, however we need this formula for an arbitrary K\"{a}hler metric $\omega$.

The pair $(\omega, \Omega)$ satisfies the following relations:
\begin{equation}
    \frac{\omega^2}{2!} = \vol_4 = \frac{1}{|\Omega|_\omega^2} \Omega \wedge \overline{\Omega} = 2 \Re \Big( \frac{1}{|\Omega|_\omega} \Omega \Big) \wedge \Re \Big( \frac{1}{|\Omega|_\omega} \Omega \Big) = 2\Im \Big( \frac{1}{|\Omega|_\omega} \Omega \Big) \wedge \Im \Big( \frac{1}{|\Omega|_\omega} \Omega \Big),
\end{equation}
where $\vol_4$ is the volume form on $(X^4,g_4)$. In addition, we have these identities involving the Hodge star operator $\star_4$ on $X^4$:
\begin{equation}
\label{eqn-star_4}
    (\star_4)^2 \alpha = (-1)^k \alpha \text{ for } \alpha \in \Omega^k(X^4), 
    \quad \star_4 \Re (\Omega) = \Re (\Omega), \quad \star_4 \Im (\Omega) = \Im (\Omega), \quad \star_4 \omega = \omega.
\end{equation}

Let $r^1, r^2$, and $r^3$ denote the angle coordinates on $T^3$. If $F$ is a smooth nowhere-vanishing complex function on $X^4$ and $G$ is a smooth strictly positive function on $X^4$, we can consider the $3$-form $\varphi$ on $M^7 = T^3 \times X^4$ given by
\begin{equation}
\label{eqn-varphi-dim-2}
    \varphi = -G dr^1 \wedge dr^2 \wedge dr^3 + dr^1 \wedge G \omega + dr^2 \wedge \Re \Big( \frac{F}{|\Omega|_\omega} \Omega \Big) + dr^3 \wedge \Im \Big( \frac{F}{|\Omega|_\omega} \Omega \Big).
\end{equation}

Routine computations show that $\varphi$ is a positive $3$-form which yields a $G_2$-structure on $M^7$. It can be shown that in this case, the metric $g_7$ on $M^7$ induced by $\varphi$ is
\begin{equation}
\label{eqn-g_7-dim-2}
    g_7 = 2^{\frac{4}{3}} |F|^{-\frac{4}{3}} G^2 (dr^1)^2 + 2^{-\frac{2}{3}}|F|^\frac{2}{3} (dr^2)^2 + 2^{-\frac{2}{3}}|F|^\frac{2}{3} (dr^3)^3 + 2^{-\frac{2}{3}}|F|^\frac{2}{3} g_4,
\end{equation}
and the volume form $\vol_7$ is
\begin{equation}
\label{eqn-vol_7-dim-2}
    \vol_7 = 2^{-\frac{4}{3}}|F|^{\frac{4}{3}} G dr^1 \wedge dr^2 \wedge dr^3 \wedge \vol_4.
\end{equation}

Using these formulae, we can compute the following Hodge star identities on $M^7$: For $\alpha \in \Omega^k(X^4)$, we have
\begin{equation}
\label{eqn-star_7-dim-2}
\begin{aligned}
    \star_7 \alpha &= (-1)^k 2^{(-\frac{4}{3} + \frac{2}{3}k)} |F|^{(\frac{4}{3} - \frac{2}{3}k)} G dr^1 \wedge dr^2 \wedge dr^3 \wedge \star_4 \alpha, \\
    \star_7 (dr^1 \wedge \alpha) &= 2^{(-\frac{8}{3} + \frac{2}{3}k)} |F|^{(\frac{8}{3} - \frac{2}{3}k)} G^{-1} dr^2 \wedge dr^3 \wedge \star_4 \alpha, \\
    \star_7 (dr^2 \wedge \alpha) &= 2^{(-\frac{8}{3} + \frac{2}{3}k)}  |F|^{(\frac{2}{3} - \frac{2}{3}k)} G dr^3 \wedge dr^1 \wedge \star_4 \alpha, \\
    \star_7 (dr^3 \wedge \alpha) &= 2^{(-\frac{2}{3} + \frac{2}{3}k)} |F|^{(\frac{2}{3} - \frac{2}{3}k)} G dr^1 \wedge dr^2 \wedge \star_4 \alpha, \\
    \star_7 (dr^1 \wedge dr^2 \wedge \alpha) &= (-1)^k 2^{(-2 + \frac{2}{3}k)} |F|^{(2 - \frac{2}{3}k)} G^{-1} dr^3 \wedge \star_4 \alpha, \\
    \star_7 (dr^3 \wedge dr^1 \wedge \alpha) &= (-1)^k 2^{(-2 + \frac{2}{3}k)} |F|^{(2 - \frac{2}{3}k)} G^{-1} dr^2 \wedge \star_4 \alpha, \\
    \star_7 (dr^2 \wedge dr^3 \wedge \alpha) &= (-1)^k 2^{\frac{2}{3}k}  |F|^{- \frac{2}{3}k} G dr^1 \wedge \star_4 \alpha, \\
    \star_7 (dr^1 \wedge dr^2 \wedge dr^3 \wedge \alpha) &= 2^{(-\frac{4}{3} + \frac{2}{3}k)} 
 |F|^{(\frac{4}{3} - \frac{2}{3}k)} G^{-1} \star_4 \alpha.
\end{aligned}
\end{equation}
Using the above and \eqref{eqn-star_4}, we get that the dual $4$-form in this case is given by
\begin{equation}
\label{eqn-psi-dim-2}
\begin{aligned}
    \psi &= -2^{-\frac{4}{3}} |F|^\frac{4}{3} \vol_4 + 2^{-\frac{4}{3}} |F|^\frac{4}{3} dr^2 \wedge dr^3 \wedge \omega \\
    &\qquad + 2^{\frac{2}{3}} |F|^{-\frac{2}{3}} G dr^3 \wedge dr^1 \wedge \Re \Big( \frac{F}{|\Omega|_\omega} \Omega \Big) + 2^{\frac{2}{3}} |F|^{-\frac{2}{3}} G dr^1 \wedge dr^2 \wedge \Im \Big( \frac{F}{|\Omega|_\omega} \Omega \Big).
\end{aligned}
\end{equation}

\subsection{Calabi--Yau \texorpdfstring{$3$}{3}-Folds and their Associated \texorpdfstring{$G_2$}{G2}-Structures}
\label{subsect-CY-3-Folds}

In a similar manner to the previous section, we can obtain a $G_2$-structure from a smooth Calabi--Yau $3$-fold $X^6$ by crossing it with the circle $S^1$. Let $X^6$ be a compact smooth Calabi--Yau $3$-fold, and let $\omega$ be a K\"{a}hler form and $\Omega$ be a nowhere vanishing holomorphic $(3,0)$-form. As in the $2$-fold case, both $\omega$ and $\Omega$ are closed, and we have the following analogous local descriptions of these forms in holomorphic coordinates:
\begin{equation}
\label{eqn-omega-local-dim-3}
    \omega = i {(g_6)}_{p\bar{q}} dz^p \wedge d\bar{z}^q,
\end{equation}
\begin{equation}
\label{eqn-Omega-local-dim-3}
    \Omega = f dz^1 \wedge dz^2 \wedge dz^3,
\end{equation}
where $g_6 = {(g_6)}_{p \bar{q}}$ is the metric associated to $\omega$ and $f$ is a local holomorphic function. The norm of $\Omega$ with respect to $\omega$ is given by the formula
\begin{equation}
\label{eqn-norm-Omega-dim-3}
    |\Omega|_\omega^2 = \frac{|f|^2}{\det {(g_6)}_{p \bar{q}}}
\end{equation}
This norm is again a constant when $\omega$ is K\"ahler Ricci-flat. 

The pair $(\omega,\Omega)$ satisfies the following relations:
\begin{equation*}
    \frac{\omega^3}{3!} = \vol_6 = i \frac{1}{|\Omega|_\omega^2} \Omega \wedge \bar{\Omega} = 2 \Re \Big(\frac{1}{|\Omega|_\omega} \Omega \Big) \wedge \Im \Big(\frac{1}{|\Omega|_\omega} \Omega \Big),
\end{equation*}
where $\vol_6$ is the volume form on $(X^6,g_6)$.

The following identities all hold, where $\star_6$ denotes the Hodge star operator on $X^6$:
\begin{equation}
\label{eqn-star_6}
\begin{aligned}
    (\star_6)^2 \beta = (-1)^k \beta \text{ for } \beta \in \Omega^k(X^6),\quad \star_6 \Re(\Omega) = \Im(\Omega), \quad \star_6 \omega = \frac{1}{2}\omega^2.
\end{aligned}
\end{equation}

Let $r$ denote the angle coordinate on $S^1$ so $dr$ is the globally defined volume form on $S^1$ with respect to the standard round metric. If $F$ is a smooth nowhere-vanishing complex function on $X^6$ and $G$ is a smooth strictly positive function on $X^6$, we can consider the $3$-form $\varphi$ on $M^7 = S^1 \times X^6$ given by
\begin{equation}
\label{eqn-varphi-dim-3}
    \varphi = \Re \Big(\frac{F}{|\Omega|_\omega} \Omega \Big) - dr \wedge G \omega.
\end{equation}

As seen in \cite{KMT12}, $\varphi$ is a positive $3$-form and thus is a $G_2$-structure on $M^7$. The $3$-form $\varphi$ induces the metric
\begin{equation}
\label{eqn-g_7-dim-3}
    g_7 = 4|F|^{-\frac{4}{3}} G^2 dr^2 + \frac{1}{2}|F|^{\frac{2}{3}} g_6.
\end{equation}
It is also seen that the associated volume form is
\begin{equation}
\label{eqn-vol_7-dim-3}
    \vol_7 = \frac{1}{4}|F|^{\frac{4}{3}} G dr \wedge \vol_6.
\end{equation}

With the expressions \eqref{eqn-g_7-dim-3} and \eqref{eqn-vol_7-dim-3} for the metric and volume form, one can compute that if $\beta \in \Omega^k(X^6)$, then the Hodge star $\star_7$ on $M^7$ is given by
\begin{equation}
\label{eqn-star_7-dim-3}
\begin{aligned}
    \star_7 \beta &= (-1)^k 2^{(-2+k)}|F|^{(\frac{4}{3} - \frac{2}{3}k)} G dr \wedge \star_6 \beta, \\
    \star_7 (dr \wedge \beta) &= 2^{(-4+k)}|F|^{(\frac{8}{3} - \frac{2}{3}k)} G^{-1} \star_6 \beta.
\end{aligned}
\end{equation}

From this and \eqref{eqn-star_6}, we see that the dual $4$-form $\psi$ is given by
\begin{equation}
\label{eqn-psi-dim-3}
    \psi = -2|F|^{-\frac{2}{3}} G dr \wedge \Im \Big(\frac{F}{|\Omega|_\omega} \Omega \Big) - \frac{1}{4} |F|^\frac{4}{3} \frac{1}{2} \omega^2.
\end{equation}  

\section{Laplacian Flow of \texorpdfstring{$G_2$}{G2}-Structures}
\label{sect-Laplacian-Flow}

We want to study the Laplacian flow of the $G_2$-structure obtained by a specific choice of functions $F$ and $G$. This flow was introduced by Bryant \cite{Bry05} and has since generated much activity, see \cite{BV20,FR20,FY18,LL21,Lot20,LW17,LW19} and references therein.

\begin{defn}
\label{defn-Laplacian-flow}
A time-dependent $G_2$-structure $\varphi_t$ on a $7$-dimensional manifold $M$ defined on some interval $[0,T)$ satisfies the Laplacian flow equation if
\begin{equation}
\label{eqn-Laplacian-flow-2}
    \frac{d \varphi_t}{d t} = {\Delta_d}_t \varphi_t
\end{equation}
wherever $\varphi_t$ is defined. Here ${\Delta_d}_t = dd_t^* + d_t^*d$ is the Hodge Laplacian with respect to the metric $(g_7)_t$ induced from $\varphi_t$.
\end{defn}

The $G_2$-Laplacian flow has been shown to have short-time existence and uniqueness when starting with an initially closed $G_2$-structure \cite{BX04} (see also \cite{BV20,Gri13,Lot20}). Such a solution preserves the closedness condition. We see that the stationary points of the Laplacian flow are torsion-free structures when $M$ is compact. Though we are primarily focused on the case where $M$ is compact, we note that torsion-free $G_2$ structures are also stationary points of the Laplacian flow even in the non-compact case \cite{Lot20}.

\subsection{The Hodge Laplacian of \texorpdfstring{$\varphi$}{varphi} on \texorpdfstring{$T^3 \times X^4$}{T3 x X4}}
\label{subsect-Hodge-Laplacian-varphi-dim-2}

Consider our first setting where $M^7 = T^3 \times X^4$ described in \S \ref{subsect-CY-2-Folds}. We can choose the functions $F$ and $G$ such that the $3$-form $\varphi$ is closed. In particular, we set $F = |\Omega|_\omega$ and $G = 1$ to get
\begin{equation}
\label{eqn-varphi-dim-2-2}
    \varphi = -dr^1 \wedge dr^2 \wedge dr^3 + dr^1 \wedge \omega + dr^2 \wedge \Re (\Omega) + dr^3 \wedge \Im (\Omega).
\end{equation}

Using our formulae from \S \ref{subsect-CY-2-Folds}, we have that the metric $g_7$ on $M^7$ is
\begin{equation}
\label{eqn-g_7-dim-2-2}
    g_7 = 2^{\frac{4}{3}}|\Omega|_\omega^{-\frac{4}{3}} (dr^1)^2 + 2^{-\frac{2}{3}} |\Omega|_\omega^\frac{2}{3} (dr^2)^2 + 2^{-\frac{2}{3}} |\Omega|_\omega^\frac{2}{3} (dr^3)^3 + 2^{-\frac{2}{3}} |\Omega|_\omega^\frac{2}{3} g_4,
\end{equation}
and the volume form $\vol_7$ is
\begin{equation}
\label{eqn-vol_7-dim-2-2}
    \vol_7 = 2^{-\frac{4}{3}} |\Omega|_\omega^{\frac{4}{3}} dr^1 \wedge dr^2 \wedge dr^3 \wedge \vol_4.
\end{equation}

From \eqref{eqn-star_7-dim-2}, we have the following identities for the Hodge star $\star_7$ on $M^7$ where $\alpha \in \Omega^k(X^4)$ is a $k$-form:
\begin{equation}
\label{eqn-star_7-dim-2-2}
\begin{aligned}
    \star_7 \alpha &= (-1)^k 2^{(-\frac{4}{3} + \frac{2}{3}k)} |\Omega|_\omega^{(\frac{4}{3} - \frac{2}{3}k)} dr^1 \wedge dr^2 \wedge dr^3 \wedge \star_4 \alpha, \\
    \star_7 (dr^1 \wedge \alpha) &= 2^{(-\frac{8}{3} + \frac{2}{3}k)} |\Omega|_\omega^{(\frac{8}{3} - \frac{2}{3}k)} dr^2 \wedge dr^3 \wedge \star_4 \alpha, \\
    \star_7 (dr^2 \wedge \alpha) &= 2^{(-\frac{2}{3} + \frac{2}{3}k)} |\Omega|_\omega^{(\frac{2}{3} - \frac{2}{3}k)} dr^3 \wedge dr^1 \wedge \star_4 \alpha, \\
    \star_7 (dr^3 \wedge \alpha) &= 2^{(-\frac{2}{3} + \frac{2}{3}k)} |\Omega|_\omega^{(\frac{2}{3} - \frac{2}{3}k)} dr^1 \wedge dr^2 \wedge \star_4 \alpha, \\
    \star_7 (dr^1 \wedge dr^2 \wedge \alpha) &= (-1)^k 2^{(-2 + \frac{2}{3}k)} |\Omega|_\omega^{(2 - \frac{2}{3}k)} dr^3 \wedge \star_4 \alpha, \\
    \star_7 (dr^3 \wedge dr^1 \wedge \alpha) &= (-1)^k 2^{(-2 + \frac{2}{3}k)} |\Omega|_\omega^{(2 - \frac{2}{3}k)} dr^2 \wedge \star_4 \alpha, \\
    \star_7 (dr^2 \wedge dr^3 \wedge \alpha) &= (-1)^k 2^{\frac{2}{3}k} |\Omega|_\omega^{- \frac{2}{3}k} dr^1 \wedge \star_4 \alpha, \\
    \star_7 (dr^1 \wedge dr^2 \wedge dr^3 \wedge \alpha) &= 2^{(-\frac{4}{3} + \frac{2}{3}k)} |\Omega|_\omega^{(\frac{4}{3} - \frac{2}{3}k)} \star_4 \alpha.
\end{aligned}
\end{equation}
This allows us to see that the dual $4$-form $\psi$ in this case is given by
\begin{equation}
\label{eqn-psi-dim-2-2}
\begin{aligned}
    \psi &= -2^{-\frac{4}{3}} |\Omega|_\omega^\frac{4}{3} \vol_4 + 2^{-\frac{4}{3}} |\Omega|_\omega^\frac{4}{3} dr^2 \wedge dr^3 \wedge \omega \\
    &\qquad + 2^{\frac{2}{3}} |\Omega|_\omega^{-\frac{2}{3}} dr^3 \wedge dr^1 \wedge \Re (\Omega) + 2^{\frac{2}{3}} |\Omega|_\omega^{-\frac{2}{3}} dr^1 \wedge dr^2 \wedge \Im (\Omega).
\end{aligned} 
\end{equation}

Both $\omega$ and $\Omega$ are closed, and so $\varphi$ is a closed $G_2$-structure. We use this as our initial data for the $G_2$-Laplacian flow. In order to find the associated evolution equations, we compute the Hodge Laplacian of $\varphi$.

\begin{lem}
\label{lem-Hodge-Laplacian-varphi-flow-dim-2}
If $\varphi$ is the $G_2$-structure defined by \eqref{eqn-varphi-dim-2-2}, then
\begin{equation}
\label{eqn-Hodge-Laplacian-varphi-flow-dim-2}
    \Delta_d \varphi = 2^{\frac{2}{3}} \cal{L}_{\nabla (|\Omega|_\omega^{-\frac{2}{3}})} \Big(2 dr^1 \wedge \omega - dr^2 \wedge \Re (\Omega) - dr^3 \wedge \Im (\Omega) \Big).
\end{equation}
\end{lem}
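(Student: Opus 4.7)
The plan is to exploit that $\omega$, $\Re(\Omega)$, $\Im(\Omega)$, and the angular $1$-forms $dr^i$ are all closed, so $\varphi$ itself is closed, and therefore $\Delta_d \varphi = dd^*\varphi = -d\star_7 d\psi$, using $d^* = -\star_7 d \star_7$ on $3$-forms in dimension $7$ and $\psi = \star_7 \varphi$ from \eqref{eqn-psi-dim-2-2}. The computation proceeds in four steps: (a) compute $d\psi$; (b) apply $\star_7$ to obtain $d^*\varphi$; (c) recognize $d^*\varphi$ as an interior product $\iota_{\nabla v}\beta$ for an appropriate closed $2$-form $\beta$; (d) conclude via Cartan's magic formula.

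For (a), set $u := |\Omega|_\omega^{4/3}$ and $v := |\Omega|_\omega^{-2/3}$. Differentiating \eqref{eqn-psi-dim-2-2}, the only surviving contributions come from the scalar factors, and $du \wedge \vol_4$ vanishes as it would be a $5$-form on the $4$-manifold $X^4$. This leaves
\[
d\psi = dr^2 \wedge dr^3 \wedge (du \wedge \omega) + dr^3 \wedge dr^1 \wedge (dv \wedge \Re(\Omega)) + dr^1 \wedge dr^2 \wedge (dv \wedge \Im(\Omega)).
\]
For (b), apply \eqref{eqn-star_7-dim-2-2} termwise. Two simplifications unify the three pieces: the algebraic relation $u = v^{-2}$ gives $|\Omega|_\omega^{-2}\,du = -2\,dv$, packaging every term in terms of $dv$; and the Kähler identity
\[
\star_4(df \wedge \alpha) = \iota_{\nabla f}\,\alpha
\]
(for any self-dual $2$-form $\alpha$ on $X^4$ and any function $f$, with $\nabla$ the $g_4$-gradient) applies uniformly to $\omega$, $\Re(\Omega)$, $\Im(\Omega)$. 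This identity is an instance of $\star(\xi \wedge \eta) = (-1)^{\deg \eta}\,\iota_{\xi^{\sharp}}\star \eta$ combined with $\star_4 \alpha = \alpha$ from \eqref{eqn-star_4}. The outcome is
\[
d^*\varphi = -2\,dr^1 \wedge \iota_{\nabla v}\omega + dr^2 \wedge \iota_{\nabla v}\Re(\Omega) + dr^3 \wedge \iota_{\nabla v}\Im(\Omega).
\]

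For (c), let $\beta := 2\,dr^1 \wedge \omega - dr^2 \wedge \Re(\Omega) - dr^3 \wedge \Im(\Omega)$. Since $\nabla v$ is tangent to $X^4$, $\iota_{\nabla v}dr^i = 0$, and the graded Leibniz rule gives $\iota_{\nabla v}(dr^i \wedge \alpha) = -dr^i \wedge \iota_{\nabla v}\alpha$ for every $2$-form $\alpha$ on $X^4$; hence $d^*\varphi = \iota_{\nabla v}\beta$. For (d), $d\beta = 0$ because every constituent factor is closed, so Cartan's magic formula produces
\[
\Delta_d \varphi = dd^*\varphi = d(\iota_{\nabla v}\beta) = \cal{L}_{\nabla v}\beta - \iota_{\nabla v}d\beta = \cal{L}_{\nabla v}\beta,
\]
which is \eqref{eqn-Hodge-Laplacian-varphi-flow-dim-2}. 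The main obstacle is the sign- and power-of-$|\Omega|_\omega$-bookkeeping in step (b), together with the conceptual step of recognizing that the three distinct contributions coalesce into a single interior product $\iota_{\nabla v}\beta$—a collapse made possible precisely because $\omega$, $\Re(\Omega)$, and $\Im(\Omega)$ are all self-dual on the Kähler $2$-fold $X^4$.
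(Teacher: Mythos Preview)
Your proof is correct and follows essentially the same route as the paper: reduce $\Delta_d\varphi$ to $-d\star_7 d\psi$ via closedness, differentiate $\psi$, convert the result to an interior product using the identity $Y\hook\star\alpha=(-1)^k\star(Y^\flat\wedge\alpha)$ (your $\star_4$ self-duality argument is just this identity applied on $X^4$), and finish with Cartan's formula. The only cosmetic difference is that the paper applies the hook-$\star$ identity directly on $M^7$ with $\sharp_7$ and then unwinds the metric relation $g_7=|\Omega|_\omega^{2/3}g_4$, whereas you first pass to $X^4$ via \eqref{eqn-star_7-dim-2-2} and then apply the identity there; the bookkeeping is equivalent.
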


\begin{proof}
Since $\varphi$ is closed, we have that
\begin{equation*}
    \Delta_d \varphi = dd^* \varphi + dd^* \varphi = dd^* \varphi.
\end{equation*}
Further, as $\varphi$ is a $3$-form on a $7$-dimensional manifold, we get that
\begin{equation*}
    dd^* \varphi = -d\star_7 d\star_7 \varphi = -d\star_7 d\psi.
\end{equation*}

We then compute
\begin{equation*}
\begin{aligned}
    d\psi &= d\Big( -2^{-\frac{4}{3}} |\Omega|_\omega^\frac{4}{3} \vol_4 + 2^{-\frac{4}{3}} |\Omega|_\omega^\frac{4}{3} dr^2 \wedge dr^3 \wedge \omega \\
    &\qquad \qquad + 2^{\frac{2}{3}} |\Omega|_\omega^{-\frac{2}{3}} dr^3 \wedge dr^1 \wedge \Re (\Omega) + 2^{\frac{2}{3}} |\Omega|_\omega^{-\frac{2}{3}} dr^1 \wedge dr^2 \wedge \Im (\Omega) \Big) \\
    &=  2^{\frac{2}{3}} \Big[ \frac{1}{3} |\Omega|_\omega^\frac{4}{3} d(\log |\Omega|_\omega) \wedge dr^2 \wedge dr^3 \wedge \omega \\
    &\qquad - \frac{2}{3} |\Omega|_\omega^{-\frac{2}{3}} d(\log |\Omega|_\omega) \wedge dr^3 \wedge dr^1 \wedge \Re (\Omega) - \frac{2}{3} |\Omega|_\omega^{-\frac{2}{3}} d(\log |\Omega|_\omega) \wedge dr^1 \wedge dr^2 \wedge \Im (\Omega) \Big].
\end{aligned}
\end{equation*}

Taking the Hodge star, and applying the identities
\begin{equation}
\label{eqn-hook-star}
    Y \hook \star \alpha = (-1)^k \star (Y^\flat \wedge \alpha) \text{ for } \alpha \in \Omega^k(M) \text{ and } Y \in \Gamma(TM),
\end{equation}
and
\begin{equation}
\label{eqn-grad-flat}
    df = (\nabla f)^\flat \text{ for } f \in \cal{C}^\infty(M),
\end{equation}
where $M$ is an arbitrary smooth manifold, we get that
\begin{equation*}
\begin{aligned}
    \star_7 d\psi &=  2^{\frac{2}{3}} \Big( \frac{1}{3} |\Omega|_\omega^\frac{4}{3} \Big(d(\log |\Omega|_\omega)\Big)^{\sharp_7} \hook \star_7 \Big[ dr^2 \wedge dr^3 \wedge \omega \Big] \\
    &\qquad -\frac{2}{3} |\Omega|_\omega^{-\frac{2}{3}} \Big(d(\log |\Omega|_\omega)\Big)^{\sharp_7} \hook \star_7 \Big[ dr^3 \wedge dr^1 \wedge \Re (\Omega) \Big] - \frac{2}{3} |\Omega|_\omega^{-\frac{2}{3}} \Big(d(\log |\Omega|_\omega)\Big)^{\sharp_7} \hook \star_7 \Big[ dr^1 \wedge dr^2 \wedge \Im (\Omega) \Big] \Big) \\
    &= 2^{\frac{2}{3}} \Big( \frac{1}{3} |\Omega|_\omega^\frac{4}{3} \Big( 2^{\frac{2}{3}} |\Omega|_\omega^{-\frac{2}{3}} \nabla_{(g_4)} (\log |\Omega|_\omega) \Big) \hook \Big[ 2^{\frac{4}{3}} |\Omega|_\omega^{-\frac{4}{3}} dr^1 \wedge \omega \Big] \\
    &\qquad -\frac{2}{3} |\Omega|_\omega^{-\frac{2}{3}} \Big( 2^{\frac{2}{3}} |\Omega|_\omega^{-\frac{2}{3}} \nabla_{(g_4)} (\log |\Omega|_\omega) \Big) \hook \Big[ 2^{-\frac{2}{3}} |\Omega|_\omega^\frac{2}{3} dr^2 \wedge \Re (\Omega) \Big] \\
    &\qquad - \frac{2}{3} |\Omega|_\omega^{-\frac{2}{3}} \Big( 2^{\frac{2}{3}} |\Omega|_\omega^{-\frac{2}{3}} \nabla_{(g_4)} (\log |\Omega|_\omega) \Big) \hook \Big[ 2^{-\frac{2}{3}} |\Omega|_\omega^\frac{2}{3} dr^3 \wedge \Im (\Omega) \Big] \Big) \\
    &= 2^{\frac{2}{3}} \Big( \frac{4}{3} |\Omega|_\omega^{-\frac{2}{3}} \Big( \nabla_{(g_4)} (\log |\Omega|_\omega) \Big) \hook \Big[ dr^1 \wedge \omega \Big] \\
    &\qquad -\frac{2}{3} |\Omega|_\omega^{-\frac{2}{3}} \Big( \nabla_{(g_4)} (\log |\Omega|_\omega) \Big) \hook \Big[ dr^2 \wedge \Re (\Omega) \Big] - \frac{2}{3} |\Omega|_\omega^{-\frac{2}{3}} \Big( \nabla_{(g_4)} (\log |\Omega|_\omega) \Big) \hook \Big[ dr^3 \wedge \Im (\Omega) \Big] \Big) \\
    &= - 2^{\frac{2}{3}} \Big( \frac{2}{3} |\Omega|_\omega^{-\frac{2}{3}} \Big( \nabla_{(g_4)} (\log |\Omega|_\omega) \Big) \hook \Big[ -2dr^1 \wedge \omega + dr^2 \wedge \Re (\Omega) + dr^3 \wedge \Im (\Omega) \Big] \Big) \\
    &= 2^{\frac{2}{3}} \Big( \nabla_{(g_4)} (|\Omega|_\omega^{-\frac{2}{3}}) \Big) \hook \Big[ -2 dr^1 \wedge \omega + dr^2 \wedge \Re (\Omega) + dr^3 \wedge \Im (\Omega) \Big].
\end{aligned}
\end{equation*}

We can use Cartan's magic formula
\begin{equation}
\label{eqn-Cartan's-magic-formula}
    \cal{L}_Y \alpha = d(Y \hook \alpha) + Y \hook (d\alpha) \text{ for } \alpha \in \Omega^k(M) \text{ and } Y \in \Gamma(TM),
\end{equation}
and the closedness of the forms $\omega$ and $\Omega$ to conclude that
\begin{equation*}
    \Delta_d \varphi = -d\star_7 d\psi = 2^{\frac{2}{3}} \cal{L}_{\nabla_{(g_4)} (|\Omega|_\omega^{-\frac{2}{3}})} \Big( 2dr^1 \wedge \omega - dr^2 \wedge \Re (\Omega) - dr^3 \wedge \Im (\Omega) \Big)
\end{equation*}
as desired.
\end{proof}

We can use the intermediate expressions in the proof of Lemma \ref{lem-Hodge-Laplacian-varphi-flow-dim-2} to compute the torsion forms of $\varphi$.

\begin{lem}
\label{lem-torsion-flow-dim-2}
If $\varphi$ is the $G_2$-structure defined by \eqref{eqn-varphi-dim-2-2}, then the torsion forms are given by
\begin{equation}
\label{eqn-torsion-flow-dim-2}
    \tau_0 = 0,\quad \tau_1 = 0,\quad \tau_2 = 2^{\frac{2}{3}} \Big( \nabla_{(g_4)} (|\Omega|_\omega^{-\frac{2}{3}}) \Big) \hook \Big[ -2dr^1 \wedge \omega + dr^2 \wedge \Re (\Omega) + dr^3 \wedge \Im (\Omega) \Big] ,\quad \tau_3 = 0.
\end{equation}
\end{lem}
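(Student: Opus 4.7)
The plan is to exploit the closedness of $\varphi$ together with the $G_2$-decomposition of forms to immediately dispatch three of the four torsion forms, and then to read off $\tau_2$ directly from an intermediate calculation already performed in the proof of Lemma \ref{lem-Hodge-Laplacian-varphi-flow-dim-2}.

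Since both $\omega$ and $\Omega$ are closed on $X^{4}$, the $3$-form $\varphi$ given by \eqref{eqn-varphi-dim-2-2} satisfies $d\varphi = 0$. The identities \eqref{eqn-torsion-id} then immediately yield
\[
\tau_0 = \frac{1}{7}\star_7(\varphi \wedge d\varphi) = 0, \qquad \tau_1 = \frac{1}{12}\star_7(\varphi \wedge \star_7 d\varphi) = 0,
\]
and substituting these into the first of the defining equations \eqref{eqn-torsion} reduces it to $\star_7 \tau_3 = d\varphi = 0$, so $\tau_3 = 0$.

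For $\tau_2$, the second equation in \eqref{eqn-torsion} simplifies, using $\tau_1 = 0$, to $d\psi = \star_7 \tau_2$. Applying $\star_7$ and using the fact that $\star_7^2 = \id$ on $2$-forms in a Riemannian $7$-manifold, we recover $\tau_2 = \star_7 d\psi$ (and since the $\Omega^5_7$-component of $d\psi$ vanishes on account of $\tau_1 = 0$, the result automatically lies in $\Omega^2_{14}$, as required by Definition \ref{defn-torsion}). The right-hand side has already been evaluated in the course of proving Lemma \ref{lem-Hodge-Laplacian-varphi-flow-dim-2}, where it was shown that
\[
\star_7 d\psi = \Big(\nabla_{(g_4)}(|\Omega|_\omega^{-\frac{2}{3}})\Big) \hook \Big[-2\, dr^1 \wedge \omega + dr^2 \wedge \Re(\Omega) + dr^3 \wedge \Im(\Omega)\Big],
\]
which matches the stated formula for $\tau_2$.

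There is essentially no obstacle here: once one notices that closedness of $\varphi$ instantly kills $\tau_0$, $\tau_1$, and $\tau_3$, the only remaining task is to invert the $\star_7$ in the equation for $d\psi$, and this is precisely what was already done in the previous lemma. The argument is short because all the real work, namely the explicit computation of $\star_7 d\psi$ in terms of the gradient of $|\Omega|_\omega^{-2/3}$ and the basic forms on $T^3 \times X^4$, has already been absorbed into Lemma \ref{lem-Hodge-Laplacian-varphi-flow-dim-2}.
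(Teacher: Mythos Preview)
Your proof is correct and follows essentially the same route as the paper: both use closedness of $\varphi$ to kill $\tau_0$, $\tau_1$, $\tau_3$ via \eqref{eqn-torsion}, then recover $\tau_2 = \star_7 d\psi$ from the intermediate computation in Lemma~\ref{lem-Hodge-Laplacian-varphi-flow-dim-2}. The only cosmetic difference is that you invoke the explicit formulas \eqref{eqn-torsion-id} for $\tau_0$ and $\tau_1$, whereas the paper appeals directly to the irreducible decomposition in \eqref{eqn-torsion}.
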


\begin{proof}
Since $\varphi$ is closed, we have $d\varphi = 0$. It follows from \eqref{eqn-torsion} that each of $\tau_0, \tau_1,$ and $\tau_3$ vanish. Next, using that $\tau_1 = 0$ and \eqref{eqn-torsion} we see that
\begin{equation*}
\begin{aligned}
    \tau_2 = (\star_7)^2 \tau_2 = \star_7 d\psi = 2^{\frac{2}{3}} \Big( \nabla_{(g_4)} (|\Omega|_\omega^{-\frac{2}{3}}) \Big) \hook \Big[ -2dr^1 \wedge \omega + dr^2 \wedge \Re (\Omega) + dr^3 \wedge \Im (\Omega) \Big].
\end{aligned}
\end{equation*}
\end{proof}

\subsection{The Hodge Laplacian of \texorpdfstring{$\varphi$}{varphi} on \texorpdfstring{$S^1 \times X^6$}{S1 x X6}}
\label{subsect-Hodge-Laplacian-varphi-dim-3}

We return to our second framework where $M^7 = S^1 \times X^6$ as described in \S \ref{subsect-CY-3-Folds}. Recall that the K\"{a}hler form $\omega$ is a closed real $(1,1)$-form on $X^6$ and that $\Omega$ is a closed nowhere-vanishing holomorphic $(3,0)$-form. Using \eqref{eqn-varphi-dim-3}, we can again set $F = |\Omega|_\omega$ and $G = 1$ to define a $G_2$-structure on $M^7$ given by
\begin{equation}
\label{eqn-varphi-dim-3-2}
    \varphi = \Re (\Omega) - dr \wedge \omega.
\end{equation}

In this case, the formulas from the \S \ref{subsect-CY-3-Folds} give that the metric $g_7$ on $M^7$ is
\begin{equation}
\label{eqn-g_7-dim-3-2}
    g_7 = 4|\Omega|_\omega^{-\frac{4}{3}} dr^2 + \frac{1}{2}|\Omega|_\omega^\frac{2}{3} g_6,
\end{equation}
the volume form $\vol_7$ is
\begin{equation}
\label{eqn-vol_7-dim-3-2}
    \vol_7 = \frac{1}{4}|\Omega|_\omega^\frac{4}{3} dr \wedge \vol_6,
\end{equation}
and the dual $4$-form $\psi$ is
\begin{equation}
\label{eqn-psi-dim-3-2}
    \psi = -2|\Omega|_\omega^{-\frac{2}{3}} dr \wedge \Im(\Omega) - \frac{1}{4} |\Omega|_\omega^\frac{4}{3} \frac{1}{2} \omega^2.
\end{equation}

We also see from \eqref{eqn-star_7-dim-3} that the Hodge star $\star_7$ acts on forms by
\begin{equation}
\label{eqn-star_7-dim-3-2}
\begin{aligned}
    \star_7 \beta &= (-1)^k 2^{(-2+k)} |\Omega|_\omega^{(\frac{4}{3} - \frac{2}{3}k)} dr \wedge \star_6 \beta, \\
    \star_7 (dr \wedge \beta) &= 2^{(-4+k)} |\Omega|_\omega^{(\frac{8}{3} - \frac{2}{3}k)} \star_6 \beta,
\end{aligned}
\end{equation}
where $\beta \in \Omega^k(X^6)$ is a $k$-form on $X^6$.

Since both $\omega$ and $\Omega$ are closed, we have that $\varphi$ defines a closed $G_2$-structure. We now compute the Hodge Laplacian of $\varphi$.

\begin{lem}
\label{lem-Hodge-Laplacian-varphi-flow-dim-3}
If $\varphi$ is the $G_2$-structure defined by \eqref{eqn-varphi-dim-3-2}, then
\begin{equation}
\label{eqn-Hodge-Laplacian-varphi-flow}
    \Delta_d \varphi = 2 \cal{L}_{\nabla (|\Omega|_\omega^{-\frac{2}{3}})} \Big( -\Re(\Omega) - 2 dr \wedge \omega\Big).
\end{equation}
\end{lem}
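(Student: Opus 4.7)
The plan is to mimic the proof of Lemma \ref{lem-Hodge-Laplacian-varphi-flow-dim-2} line by line, now using the $S^1\times X^6$ formulas of \S\ref{subsect-CY-3-Folds}. Since $\varphi$ is closed (as $\Re(\Omega)$ and $\omega$ are closed), $d\varphi_t$ contributes nothing and
\[
\Delta_d\varphi \;=\; dd^*\varphi \;=\; -\,d\star_7 d\star_7\varphi \;=\; -\,d\star_7 d\psi,
\]
using that $\varphi$ is a $3$-form on a $7$-manifold. So I only need to compute $\star_7 d\psi$ and then take $-d$ of the result.

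To compute $d\psi$, I would apply $d$ to \eqref{eqn-psi-dim-3-2}. Because $\omega$, $\Omega$, and $dr$ are all closed, only derivatives of the scalar factors $|\Omega|_\omega^{-2/3}$ and $|\Omega|_\omega^{4/3}$ contribute. The product rule gives
\[
d\psi \;=\; \tfrac{2}{3}\,|\Omega|_\omega^{-\frac{2}{3}}\,d(\log|\Omega|_\omega)\wedge dr\wedge\Im(\Omega) \;-\;\tfrac{4}{3}\,|\Omega|_\omega^{\frac{4}{3}}\,d(\log|\Omega|_\omega)\wedge \tfrac{1}{2}\omega^2,
\]
after moving $d(\log|\Omega|_\omega)$, which is pulled back from $X^6$, past $dr$ with the appropriate sign. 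I would then apply $\star_7$ term by term using the two identities in \eqref{eqn-star_7-dim-3-2} (the first term has a $dr$ factor already, the second does not), simplify the resulting powers of $|\Omega|_\omega$, and convert $d(\log|\Omega|_\omega)\wedge(\cdot)$ into a contraction with $\nabla_{(g_6)}(\log|\Omega|_\omega)$ by means of the hook/star identity \eqref{eqn-hook-star} together with \eqref{eqn-grad-flat}. Along the way I would use the $X^6$-Hodge-star relations \eqref{eqn-star_6}, in particular $\star_6\Re(\Omega)=\Im(\Omega)$ and $\star_6\omega=\tfrac{1}{2}\omega^2$ (and their inverses via $(\star_6)^2=(-1)^k$), to convert $\star_6$ of the $X^6$-pieces back into $\omega$ and $\Re(\Omega)$. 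The coefficient $\tfrac{2}{3}|\Omega|_\omega^{-2/3}$ should factor out and combine with $\nabla(\log|\Omega|_\omega)$ to produce $\nabla(|\Omega|_\omega^{-2/3})$, yielding an expression of the form
\[
\star_7 d\psi \;=\; \bigl(\nabla_{(g_6)}(|\Omega|_\omega^{-\frac{2}{3}})\bigr)\hook\bigl[\Re(\Omega) + 2\,dr\wedge\omega\bigr]
\]
(with the exact sign determined by the bookkeeping, matching what is needed for the final statement).

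The last step is to apply $-d$ and invoke Cartan's magic formula \eqref{eqn-Cartan's-magic-formula}: since $\Re(\Omega)+2\,dr\wedge\omega$ is closed (both summands are closed on $S^1\times X^6$), for any vector field $Y$ we have $\cal{L}_Y\alpha=d(Y\hook\alpha)$ on this form, so
\[
-\,d\star_7 d\psi \;=\; \cal{L}_{\nabla(|\Omega|_\omega^{-2/3})}\bigl[-\Re(\Omega) - 2\,dr\wedge\omega\bigr],
\]
which is exactly \eqref{eqn-Hodge-Laplacian-varphi-flow}. The main obstacle is purely computational: tracking the signs coming from the $(-1)^k$ in \eqref{eqn-star_7-dim-3-2}, from commuting $dr$ past forms on $X^6$, and from the interaction between $\star_6$ and the complex form $\Omega$, and then combining several rational powers of $|\Omega|_\omega$ so that everything collapses into the single scalar $|\Omega|_\omega^{-2/3}$ whose gradient appears in the final Lie derivative.
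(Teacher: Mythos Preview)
Your proposal is correct and follows essentially the same approach as the paper's own proof: reduce to $-d\star_7 d\psi$ by closedness of $\varphi$, compute $d\psi$ from \eqref{eqn-psi-dim-3-2}, apply the hook/star identity \eqref{eqn-hook-star} together with \eqref{eqn-grad-flat} and the $\star_7$ formulas \eqref{eqn-star_7-dim-3-2} to obtain $\star_7 d\psi = \bigl(\nabla_{(g_6)}(|\Omega|_\omega^{-2/3})\bigr)\hook\bigl[\Re(\Omega)+2\,dr\wedge\omega\bigr]$, and conclude via Cartan's formula. The paper carries out exactly these steps with the same intermediate expressions, including the use of $(d\log|\Omega|_\omega)^{\sharp_7}=|\Omega|_\omega^{-2/3}\nabla_{(g_6)}(\log|\Omega|_\omega)$ coming from \eqref{eqn-g_7-dim-3-2}.
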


\begin{proof}
We have that $\varphi$ is closed, and so as noted in the proof of Lemma \ref{lem-Hodge-Laplacian-varphi-flow-dim-2},
\begin{equation*}
    \Delta_d \varphi = -d\star_7 d\psi.
\end{equation*}

We compute
\begin{equation*}
\begin{aligned}
    d\psi &= d\Big(-2 |\Omega|_\omega^{-\frac{2}{3}} dr \wedge \Im(\Omega) - \frac{1}{4} |\Omega|_\omega^\frac{4}{3} \frac{1}{2} \omega^2\Big) \\
    &= \frac{4}{3} |\Omega|_\omega^{-\frac{2}{3}} d(\log |\Omega|_\omega) \wedge dr \wedge \Im(\Omega) - \frac{1}{3} |\Omega|_\omega^\frac{4}{3} d(\log |\Omega|_\omega) \wedge \frac{1}{2} \omega^2.
\end{aligned}
\end{equation*}

Taking the Hodge star and applying the identities \eqref{eqn-hook-star} and \eqref{eqn-grad-flat}, we get
\begin{equation*}
\begin{aligned}
    \star_7 d\psi &= \frac{4}{3} |\Omega|_\omega^{-\frac{2}{3}} \star_7 \Big[ d(\log |\Omega|_\omega) \wedge dr \wedge \Im(\Omega)\Big] - \frac{1}{3} |\Omega|_\omega^\frac{4}{3} \star_7 \Big[ d(\log |\Omega|_\omega) \wedge \frac{1}{2} \omega^2\Big] \\
    &= \frac{4}{3} |\Omega|_\omega^{-\frac{2}{3}} \Big(d (\log |\Omega|_\omega) \Big)^{\sharp_7} \hook \star_7 \Big[ dr \wedge \Im(\Omega)\Big] - \frac{1}{3} |\Omega|_\omega^\frac{4}{3} \Big(d (\log |\Omega|_\omega) \Big)^{\sharp_7} \hook \star_7 \Big[\frac{1}{2} \omega^2\Big] \\
    &= \frac{4}{3} |\Omega|_\omega^{-\frac{2}{3}} \Big( 2 |\Omega|_\omega^{-\frac{2}{3}} \nabla_{(g_6)} (\log |\Omega|_\omega) \Big) \hook \Big[ - \frac{1}{2} |\Omega|_\omega^\frac{2}{3} \Re(\Omega)\Big] - \frac{1}{3} |\Omega|_\omega^\frac{4}{3} \Big( 2 |\Omega|_\omega^{-\frac{2}{3}} \nabla_{(g_6)} (\log |\Omega|_\omega) \Big) \hook \Big[ 4 |\Omega|_\omega^{-\frac{4}{3}} dr \wedge \omega\Big] \\
    &= - \frac{4}{3} |\Omega|_\omega^{-\frac{2}{3}} \Big(\nabla_{(g_6)} (\log |\Omega|_\omega) \Big) \hook \Big[ \Re(\Omega) + 2 dr \wedge \omega\Big] \\
    &= 2 \Big(\nabla_{(g_6)} ( |\Omega|_\omega^{-\frac{2}{3}}) \Big) \hook \Big[ \Re(\Omega) + 2 dr \wedge \omega \Big].
\end{aligned}
\end{equation*}

Using Cartan's magic formula \eqref{eqn-Cartan's-magic-formula} and the closedness of $\omega$ and $\Omega$, we get that
\begin{equation*}
    \Delta_d \varphi = -d \star_7 d\psi = 2 \cal{L}_{\nabla_{(g_6)} ( |\Omega|_\omega^{-\frac{2}{3}}) } \Big( -\Re(\Omega) - 2 dr \wedge \omega\Big)
\end{equation*}
as desired.
\end{proof}

We can again use the intermediate expressions to compute the torsion forms of $\varphi$. In particular, we have

\begin{lem}
\label{lem-torsion-flow-dim-3}
If $\varphi$ is the $G_2$-structure defined by \eqref{eqn-varphi-dim-3-2}, then the torsion forms are given by
\begin{equation}
\label{eqn-torsion-flow-dim-3}
    \tau_0 = 0,\quad \tau_1 = 0,\quad \tau_2 = 2 \Big(\nabla_{(g_6)} ( |\Omega|_\omega^{-\frac{2}{3}}) \Big) \hook \Big[ \Re(\Omega) + 2 dr \wedge \omega\Big],\quad \tau_3 = 0.
\end{equation}
\end{lem}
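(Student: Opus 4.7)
The plan is to mirror the proof of Lemma \ref{lem-torsion-flow-dim-2} verbatim, since the structural setup on $S^1 \times X^6$ is entirely parallel to the $T^3 \times X^4$ case and the hard computational work has already been done inside the proof of Lemma \ref{lem-Hodge-Laplacian-varphi-flow-dim-3}.

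First, I would observe that $\omega$ and $\Omega$ are closed on $X^6$, so $\varphi = \Re(\Omega) - dr \wedge \omega$ is closed on $M^7 = S^1 \times X^6$, giving $d\varphi = 0$. Comparing with the defining relation
\[
d\varphi = \tau_0 \psi + 3\tau_1 \wedge \varphi + \star \tau_3
\]
and invoking the fact that the summands $\Omega^4_1$, $\Omega^4_7$, $\Omega^4_{27}$ are distinct $G_2$-irreducible components of $\Omega^4(M^7)$, I immediately conclude $\tau_0 = 0$, $\tau_1 = 0$, and $\tau_3 = 0$.

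With $\tau_1 = 0$ in hand, the second torsion equation in \eqref{eqn-torsion} collapses to $d\psi = \star_7 \tau_2$. Since $\star_7^2 = \id$ on $\Omega^2(M^7)$ (because $k(n-k) = 2 \cdot 5 = 10$ is even when $n=7$ and $k=2$), applying $\star_7$ to both sides yields $\tau_2 = \star_7 d\psi$. But this quantity was computed explicitly in the proof of Lemma \ref{lem-Hodge-Laplacian-varphi-flow-dim-3}, where we found
\[
\star_7 d\psi = \Big(\nabla_{(g_6)}(|\Omega|_\omega^{-\frac{2}{3}})\Big) \hook \Big[\Re(\Omega) + 2 dr \wedge \omega\Big],
\]
which is precisely the stated expression for $\tau_2$.

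There is no substantive obstacle here; the only point that requires a pointer to the literature is the $G_2$-irreducibility of the $\Omega^4$ decomposition, which justifies reading off $\tau_0, \tau_1, \tau_3$ individually from $d\varphi = 0$ rather than just their sum. This is a standard fact recorded in \cite{Kar09} and was already invoked implicitly in Lemma \ref{lem-torsion-flow-dim-2}, so no new argument is required.
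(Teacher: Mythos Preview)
Your proposal is correct and follows exactly the approach the paper intends: it mirrors the proof of Lemma~\ref{lem-torsion-flow-dim-2}, reading off $\tau_0=\tau_1=\tau_3=0$ from $d\varphi=0$ via the irreducible decomposition, and then identifying $\tau_2=\star_7 d\psi$ with the intermediate expression already computed in the proof of Lemma~\ref{lem-Hodge-Laplacian-varphi-flow-dim-3}. The paper does not write out a separate proof for this lemma but simply remarks that one can ``again use the intermediate expressions,'' which is precisely what you do.
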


\subsection{The Evolution Equations}
\label{subsect-evolution-eqns-flow}

We now start the Laplacian flow. Assuming the ansatz \eqref{eqn-varphi-dim-2-2} on $T^3 \times X^4$ is preserved, by the result of Lemma \ref{lem-Hodge-Laplacian-varphi-flow-dim-2}, we have the evolution equation
\begin{equation}
\label{eqn-Laplacian-flow-dim-2-3}
\begin{aligned}
    &\frac{d}{dt} \Big( -dr^1 \wedge dr^2 \wedge dr^3 + dr^1 \wedge \omega + dr^2 \wedge \Re (\Omega) + dr^3 \wedge \Im (\Omega) \Big) \\
    &\qquad = 2^{\frac{2}{3}} \cal{L}_{\nabla_{(g_4)} (|\Omega|_\omega^{-\frac{2}{3}})} \Big( 2dr^1 \wedge \omega - dr^2 \wedge \Re (\Omega) - dr^3 \wedge \Im (\Omega) \Big).
\end{aligned}
\end{equation}

Similarly, assuming that the ansatz \eqref{eqn-varphi-dim-3-2} on $S^1 \times X^6$ is preserved, by the result of Lemma \ref{lem-Hodge-Laplacian-varphi-flow-dim-3} we see that the Laplacian flow leads to the evolution equation
\begin{equation}
\label{eqn-Laplacian-flow-dim-3-3}
    \frac{d}{dt} \Big( \Re(\Omega) - dr \wedge \omega \Big) = 2 \cal{L}_{\nabla_{(g_6)} (|\Omega|_\omega^{-\frac{2}{3}})} \Big( -\Re(\Omega) - 2 dr \wedge \omega \Big).
\end{equation}
The terms involving $\omega$ and $\Omega$ in both cases are similar and to tackle both these cases simultaneously, we let $h$ denote either the metric $g_4$ on $X^4$ or the metric $g_6$ on $X^6$. 

Motivated by the above and noting time dependencies, we consider ans\"{a}tze $(\omega_t,\Omega_t)$ on $X^{2n}$ that satisfy the coupled differential equations:
\begin{equation}
\label{eqn-ansatz-omega-flow}
    \frac{d \omega_t}{d t} = 2K\cal{L}_{\nabla_{h_t}  (|\Omega_t|_{\omega_t}^{-\frac{2}{3}})} \omega_t,
\end{equation}
\begin{equation}
\label{eqn-ansatz-Omega-flow}
    \frac{d \Omega_t}{d t} = -K\cal{L}_{\nabla_{h_t}  (|\Omega_t|_{\omega_t}^{-\frac{2}{3}})} \Omega_t,
\end{equation}
where the constant $K = 2^{\frac{n}{3}}$.

We note that if $(\omega_t,\Omega_t)$ satisfy \eqref{eqn-ansatz-omega-flow} and \eqref{eqn-ansatz-Omega-flow}, then they also satisfy \eqref{eqn-Laplacian-flow-dim-2-3} or \eqref{eqn-Laplacian-flow-dim-3-3} respectively. 

\begin{rmk}
\label{rmk-induced-metric-and-cplx-struct}
We also note that the metric $h_t$ along the flow is determined from the pair $(\omega_t,\Omega_t)$: from $\Omega_t$, we obtain the complex structure $J_t$ by defining the subbundle $T^{1,0}X \subset T_{\mathbb{C}} X$ to be the kernel of $\Omega_t$, and then defining $J_t$ by $+i$ on $T^{1,0}X$ and $-i$ on its conjugate $T^{0,1}X$. Then we define as usual $h_t(X,Y) = \omega_t(X, J_tY)$.
\end{rmk}

\begin{rmk}
\label{rmk-compatibility}
A priori, it is not known if the structures $(\omega_t, \Omega_t)$ along the flow will remain compatible and integrable for all time. However, the solutions presented in the sequel will satisfy any compatibility conditions required as they are obtained by pulling back compatible structures via diffeomorphisms.
\end{rmk}

In the next section, we will construct a solution $(\omega_t,\Omega_t)$ compatible with an integrable complex structure $J_t$ satisfying \eqref{eqn-ansatz-omega-flow} and \eqref{eqn-ansatz-Omega-flow}. Before this, in part to motivate the solution, we take a closer look at the Lie derivative term in the first of the coupled equations. We have the identity
\begin{equation}
\label{eqn-Lie-deriv-MA1/3-flow}
    \cal{L}_{\nabla_h (|\Omega|_{\omega}^{-\frac{2}{3}})} \omega = 2i \partial_J \overline{\partial}_J (|\Omega|_{\omega}^{-\frac{2}{3}}),
  \end{equation}
which holds on any K\"ahler Calabi--Yau structure $(X,J,\omega,h,\Omega)$. To show this, we first note that in local complex coordinates, we have
\begin{equation}
\label{eqn-grad-norm-local}
    \nabla_h (|\Omega|_{\omega}^{-\frac{2}{3}}) = \frac{\del}{\del z^j} (|\Omega|_{\omega}^{-\frac{2}{3}}) h^{j\overline{k}} \frac{\del}{\del \overline{z}^k} + \frac{\del}{\del \overline{z}^j} (|\Omega|_{\omega}^{-\frac{2}{3}}) h^{k \overline{j}} \frac{\del}{\del z^k}.
\end{equation}

Using our local expressions \eqref{eqn-omega-local-dim-2} and \eqref{eqn-omega-local-dim-3} for $\omega$ and Cartan's magic formula \eqref{eqn-Cartan's-magic-formula}, we see that
\begin{equation*}
\begin{aligned}
    \cal{L}_{\nabla_h (|\Omega|_{\omega}^{-\frac{2}{3}})} \omega &= d \Big[ \nabla_h (|\Omega|_{\omega}^{-\frac{2}{3}}) \hook \omega \Big] \\
    &= d \Big[ \Big(\frac{\del}{\del z^j} (|\Omega|_{\omega}^{-\frac{2}{3}}) h^{j\overline{k}} \frac{\del}{\del \overline{z}^k} + \frac{\del}{\del \overline{z}^j} (|\Omega|_{\omega}^{-\frac{2}{3}}) h^{k \overline{j}} \frac{\del}{\del z^k}\Big) \hook \Big(i h_{p\overline{q}} dz^p \wedge d\overline{z}^q\Big) \Big] \\
    &= -i d\Big[ \frac{\del}{\del z^j} (|\Omega|_{\omega}^{-\frac{2}{3}}) dz^j \Big] + i d\Big[ \frac{\del}{\del \overline{z}^j} (|\Omega|_{\omega}^{-\frac{2}{3}}) d\overline{z}^j \Big] \\
    &= 2i \partial \overline{\partial} (|\Omega|_{\omega}^{-\frac{2}{3}}).
\end{aligned}
\end{equation*}
The identity \eqref{eqn-Lie-deriv-MA1/3-flow} indicates that the flow of $\omega_t$ \eqref{eqn-ansatz-omega-flow} is related to the MA$^{\frac{1}{3}}$ flow; this will be made precise in the following section.

A similar consideration of the other Lie derivative term tells us that the complex structure $J_t$ on $X^4$ (and respectively on $X^6$) must be changing in time. This is because the expression\begin{equation*}
    \cal{L}_{\nabla_{h_t} (|\Omega_t|_{\omega_t}^{-\frac{2}{3}})} \Omega_t
\end{equation*}
produces terms of type $(1,1)$ (respectively type $(2,1)$) with respect to the current complex structure $J_t$. In order for $\Omega_t$ to remain a $(2,0)$-form (respectively $(3,0)$-form) so that we can define a time-dependent $G_2$-structure $\varphi_t$ by the expression
\begin{equation}
\label{eqn-ansatz-varphi-flow-2}
    \varphi_t = 
    \begin{cases}
        - dr^1 \wedge dr^2 \wedge dr^3 + dr^1 \wedge \omega_t \\
        \qquad + dr^2 \wedge \Re (\Omega_t) + dr^3 \wedge \Im (\Omega_t) &\text{ on } T^3 \times X^4, \\[5pt]
        \Re (\Omega_t) - dr \wedge \omega_t &\text{ on } S^1 \times X^6,
    \end{cases}
\end{equation}
on $M^7$, the complex structure $J_t$ must change as well. To solve the coupled system \eqref{eqn-ansatz-omega-flow}, \eqref{eqn-ansatz-Omega-flow}, we will act on compatible structures by a moving family of diffeomorphisms $\Theta_t$ so that from this new point of view, the complex structure is fixed; this idea can be found in \cite{FPPZ21a}.

\subsection{A Solution from the \texorpdfstring{MA$^{\frac{1}{3}}$}{MA1/3} Flow}
\label{subsect-soln-MA1/3}

Recall that $(X^{2n},\omega,J,h,\Omega)$ denotes a compact K\"ahler Calabi--Yau $n$-fold. Let $u_t$ be a smooth solution to the MA$^\frac{1}{3}$ flow
\begin{equation}
\label{eqn-MA1/3-flow-3}
    \frac{du_t}{dt} = 6K \Big(e^{-2 \log |\Omega|_{\omega}} \frac{\det (\omega + i \partial \overline{\partial} u_t)}{\det \omega} \Big)^\frac{1}{3}, \quad \omega + i \partial \overline{\partial} u_t > 0
\end{equation}
on $X^{2n}$ with $u_0 = 0$ and $K  = 2^{\frac{n}{3}}$ as before. It was proved in \cite{PZ19} that a solution to this flow exists for all time $t \in [0,\infty)$. We will discuss this in further detail in a later section \S \ref{sect-anal-prelims}. From the flow, we may define a family of K\"{a}hler metrics $\wtilde{\omega}_t = \omega + i \partial \overline{\partial} u_t$ on $X^{2n}$ defined for all $t \in [0,\infty)$. The complex structure $J$ is fixed along this flow, hence we have a family of K\"{a}hler triples $(\wtilde{\omega}_t, J, \wtilde{h}_t)$ on $X^{2n}$.

Using \eqref{eqn-MA1/3-flow-3} and \eqref{eqn-norm-Omega-dim-2} or \eqref{eqn-norm-Omega-dim-3}, we can compute that
\begin{equation*}
\begin{aligned}
    \frac{du_t}{dt} &= 6K \Big( \frac{\det (\omega + i \partial \overline{\partial} u_t)}{|\Omega|_\omega^2 \det \omega} \Big)^\frac{1}{3} \\
    &= 6K \Big( \frac{\det (\omega + i \partial \overline{\partial} u_t)}{|f|^2} \Big)^\frac{1}{3} \\
    &= 6K (|\Omega|_{\wtilde{\omega}_t}^{-\frac{2}{3}}).
\end{aligned}
\end{equation*}
Hence
\begin{equation}
\label{eqn-del-delbar-norm-Omega}
    \frac{d\wtilde{\omega}_t}{dt} = \frac{d}{dt}(\omega + i \partial \overline{\partial} u_t) = i \partial \overline{\partial} \frac{du_t}{dt} = 6Ki \partial \overline{\partial} (|\Omega|_{\wtilde{\omega}_t}^{-\frac{2}{3}}).
\end{equation}

Using the holomorphic volume form $\Omega$ and the smooth solution $\wtilde{\omega}_t$, we can define a time-dependent vector field $Y$ by
\begin{equation}
\label{eqn-Y_t-flow}
    Y_t = -K \nabla_{\wtilde{h}_t} (|\Omega|_{\wtilde{\omega}_t}^{-\frac{2}{3}}).
\end{equation}

Let $\Theta_t$ be the $1$-parameter family of diffeomorphisms generated by the vector field $Y$ in the sense that
\begin{equation}
\label{eqn-Theta_t-flow}
    \frac{d}{dt}\Theta_t(p) = Y_t (\Theta_t (p)), \quad \Theta_0 = \id_{X^6}.
\end{equation}
This family exists for all time $t \in [0,\infty)$ (see Lemma 3.15 in \cite{CK04}).

We can pullback our tensors of interest via this family of diffeomorphisms. Define
\begin{equation}
\label{eqn-omega_t-Omega_t-flow}
    \omega_t = \Theta_t^* \wtilde{\omega}_t, \quad \Omega_t = \Theta_t^* \Omega.
\end{equation}
In general $\Omega_t$ will not remain a $(3,0)$-form with respect to the original complex structure $J$ on $X^6$. However, by pulling $J$ back by the same diffeomorphism $\Theta_t$, we get a flow of complex structures $J_t = \Theta_t^* J$ which keeps $\Omega_t$ as a holomorphic volume form. Further, since each of our tensors were obtained via pullbacks, we also obtain an Riemannian metric $h_t = \Theta_t^* \wtilde{h}_t$ compatible with $\omega_t$ and $J_t$. That is, we have defined yet another family of K\"{a}hler triples $(\omega_t, J_t, h_t)$ on $X^6$.

Using DeTurck's trick and tensorial properties, we will show that the pair $(\omega_t,\Omega_t)$ is a solution to the coupled equations \eqref{eqn-ansatz-omega-flow} and \eqref{eqn-ansatz-Omega-flow}. Recall that the complex structure $J_t$ and in turn the metric $h_t$ are determined by the pair $(\omega_t,\Omega_t)$ and satisfy K\"{a}hler compatibility conditions as described above.

From \eqref{eqn-del-delbar-norm-Omega} we compute
\begin{equation*}
\begin{aligned}
    \frac{d\omega_t}{dt} &= \frac{d}{dt} (\Theta_t^* \wtilde{\omega}_t) \\
    &= \Theta_t^* (\cal{L}_{Y_t} \wtilde{\omega}_t) + \Theta_t^* \Big(\frac{d\wtilde{\omega}_t}{dt}\Big) \\
    &= \cal{L}_{(\Theta_t^{-1})_* Y_t} (\Theta_t^* \wtilde{\omega}_t) + \Theta_t^* \Big(6K i \partial \overline{\partial} (|\Omega|_{\wtilde{\omega}_t}^{-\frac{2}{3}}) \Big) \\
    &= \cal{L}_{-K(\Theta_t^{-1})_* [\nabla_{\wtilde{h}_t} (|\Omega|_{\wtilde{\omega}_t}^{-\frac{2}{3}})]} \omega_t + 6K i \partial_t \overline{\partial}_t \Big( \Theta_t^*(|\Omega|_{\wtilde{\omega}_t}^{-\frac{2}{3}}) \Big) \\
    &= -K\cal{L}_{\nabla_{h_t} (|\Omega_t|_{\omega_t}^{-\frac{2}{3}})} \omega_t + 6K i \partial_t \overline{\partial}_t (|\Omega_t|_{\omega_t}^{-\frac{2}{3}}).
\end{aligned}
\end{equation*}

Using \eqref{eqn-Lie-deriv-MA1/3-flow}, we see that
\begin{equation*}
    \frac{d\omega_t}{dt} = 2K\cal{L}_{\nabla_{h_t} (|\Omega_t|_{\omega_t}^{-\frac{2}{3}})} \omega_t
\end{equation*}
which is just \eqref{eqn-ansatz-omega-flow}.

Similarly, we can check that
\begin{equation*}
\begin{aligned}
    \frac{d\Omega_t}{dt} &= \frac{d}{dt} (\Theta_t^* \Omega) \\
    &= \Theta_t^* (\cal{L}_{Y_t} \Omega) \\
    &= \cal{L}_{(\Theta_t^{-1})_* Y_t} (\Theta_t^* \Omega) \\
    &= \cal{L}_{-K(\Theta_t^{-1})_* [\nabla_{\wtilde{h}_t} (|\Omega|_{\wtilde{\omega}_t}^{-\frac{2}{3}})]} \Omega_t \\
    &= -K\cal{L}_{\nabla_{h_t} (|\Omega_t|_{\omega_t}^{-\frac{2}{3}})} \Omega_t,
\end{aligned}
\end{equation*}
which is \eqref{eqn-ansatz-Omega-flow}.

Combining what we have thus far, we see that if we start the $G_2$-Laplacian flow with initial data of the form
\begin{equation}
\label{eqn-varphi-flow-initial-2}
    \varphi = 
    \begin{cases}
        - dr^1 \wedge dr^2 \wedge dr^3 + dr^1 \wedge \omega \\
        \qquad + dr^2 \wedge \Re (\Omega) + dr^3 \wedge \Im (\Omega) &\text{ on } M^7 = T^3 \times X^4, \\[5pt]
        \Re (\Omega) - dr \wedge \omega &\text{ on } M^7 = S^1 \times X^6.
    \end{cases}
\end{equation}
then the $G_2$-structures induced by the MA$^{\frac{1}{3}}$ flow and given by
\begin{equation}
\label{eqn-ansatz-varphi-flow-3}
    \varphi_t = 
    \begin{cases}
        - dr^1 \wedge dr^2 \wedge dr^3 + dr^1 \wedge \Theta_t^* \wtilde{\omega}_t \\
        \qquad + dr^2 \wedge \Re (\Theta_t^* \Omega) + dr^3 \wedge \Im (\Theta_t^* \Omega) &\text{ on } M^7 = T^3 \times X^4, \\[5pt]
        \Re (\Theta_t^* \Omega) - dr \wedge \Theta_t^* \wtilde{\omega}_t &\text{ on } M^7 = S^1 \times X^6.
    \end{cases} 
\end{equation}
solve the $G_2$-Laplacian flow equation \eqref{eqn-Laplacian-flow-2} for all time $t \in [0,\infty)$. By uniqueness of the flow \cite{BX04}, we conclude that the $G_2$-Laplacian flow preserves the ansatz \eqref{eqn-ansatz-varphi-flow-3} and is equivalent to the MA$^{\frac{1}{3}}$ flow for this class of initial data.

\section{Laplacian Coflow of \texorpdfstring{$G_2$}{G2}-Structures}
\label{sect-Laplacian-Coflow}

We apply a similar technique to the Laplacian coflow of $G_2$-structures. This flow was first introduced in \cite{KMT12} and, alongside a modification of it, has been studied in \cite{Gri13, Gri16, LSES22}.

\begin{defn}
\label{defn-Laplacian-coflow}
A time-dependent $G_2$-structure $\varphi_t$ on a $7$-dimensional manifold $M$ defined on some interval $[0,T)$ satisfies the Laplacian coflow equation if
\begin{equation}
\label{eqn-Laplacian-coflow-2}
    \frac{d\psi_t}{dt} = {\Delta_d}_t \psi_t
\end{equation}
wherever $\psi_t$ is defined. Here $\psi_t = \star_t \varphi_t$ is the Hodge dual of $\varphi_t$ and ${\Delta_d}_t = dd^*_t + d^*_td$ is the Hodge Laplacian with respect to the metric ${(g_7)}_t$ and volume form ${(\vol_7)}_t$ induced from $\varphi_t$.
\end{defn}

\begin{rmk}
\label{rmk-coflow-sign}
We note that the Laplacian coflow was originally introduced with a minus sign on the right-hand side of \eqref{eqn-Laplacian-coflow-2} by analogy with the heat equation but we will follow the convention in \cite{Lot20}. We also remark that the $4$-form $\psi$ does not determine an orientation, however we may assume an initial orientation which will stay fixed along the flow.
\end{rmk}

Unlike the Laplacian flow, it is not known whether we have short-time existence or uniqueness even when starting with an initially coclosed structure. We do know that the flow does preserve coclosedness when starting with a coclosed structure in the same way that the Laplacian flow preserves closedness \cite{Lot20}. Additionally, we have that the stationary points of the coflow are torsion-free $G_2$-structures.

\subsection{The Hodge Laplacian of \texorpdfstring{$\psi$}{psi} on \texorpdfstring{$T^3 \times X^4$}{T3 x X4}}
\label{subsect-Hodge-Laplacian-psi-dim-2}

As we did in \S \ref{subsect-Hodge-Laplacian-varphi-dim-2}, we make appropriate choices for the functions $F$ and $G$ in \eqref{eqn-varphi-dim-2} and study the resulting equations. In order to get a coclosed structure, we pick $F = 1$ and $G = |\Omega|_\omega$ and define the $G_2$-structure on $M^7 = T^3 \times X^4$ by
\begin{equation}
\label{eqn-varphi-dim-2-3}
    \varphi = -|\Omega|_\omega dr^1 \wedge dr^2 \wedge dr^3 + dr^1 \wedge |\Omega|_\omega \omega + dr^2 \wedge \Re \Big( \frac{1}{|\Omega|_\omega} \Omega \Big) + dr^3 \wedge \Im \Big( \frac{1}{|\Omega|_\omega} \Omega \Big).
\end{equation}

The identities \eqref{eqn-g_7-dim-3}, \eqref{eqn-vol_7-dim-3}, and \eqref{eqn-psi-dim-3} tell us that the metric $g_7$ on $M^7$ is 
\begin{equation}
\label{eqn-g_7-dim-2-3}
    g_7 = 2^{\frac{4}{3}} |\Omega|_\omega^2 (dr^1)^2 + 2^{-\frac{2}{3}} (dr^2)^2 + 2^{-\frac{2}{3}} (dr^3)^2 + 2^{-\frac{2}{3}} g_4,
\end{equation}
the volume form $\vol_7$ is
\begin{equation}
\label{eqn-vol_7-dim-2-3}
    \vol_7 = 2^{-\frac{4}{3}} |\Omega|_\omega dr^1 \wedge dr^2 \wedge dr^3 \wedge \vol_4,
\end{equation}
and the dual $4$-form $\psi$ is
\begin{equation}
\label{eqn-psi-dim-2-3}
    \psi = -2^{-\frac{4}{3}} \vol_4 + 2^{-\frac{4}{3}} dr^2 \wedge dr^3 \wedge \omega + 2^{\frac{2}{3}} dr^3 \wedge dr^1 \wedge \Re (\Omega) + 2^{\frac{2}{3}} dr^1 \wedge dr^2 \wedge \Im (\Omega).
\end{equation}

We also have the following identities for the Hodge star $\star_7$ on $M^7$ when $\alpha \in \Omega^k(X^4)$:
\begin{equation}
\label{eqn-star_7-dim-2-3}
\begin{aligned}
    \star_7 \alpha &= (-1)^k 2^{(-\frac{4}{3} + \frac{2}{3}k)} |\Omega|_\omega dr^1 \wedge dr^2 \wedge dr^3 \wedge \star_4 \alpha, \\
    \star_7 (dr^1 \wedge \alpha) &= 2^{(-\frac{8}{3} + \frac{2}{3}k)} |\Omega|_\omega^{-1} dr^2 \wedge dr^3 \wedge \star_4 \alpha, \\
    \star_7 (dr^2 \wedge \alpha) &= 2^{(-\frac{2}{3} + \frac{2}{3}k)} |\Omega|_\omega dr^3 \wedge dr^1 \wedge \star_4 \alpha, \\
    \star_7 (dr^3 \wedge \alpha) &= 2^{(-\frac{2}{3} + \frac{2}{3}k)} |\Omega|_\omega dr^1 \wedge dr^2 \wedge \star_4 \alpha, \\
    \star_7 (dr^1 \wedge dr^2 \wedge \alpha) &= (-1)^k 2^{(-2 + \frac{2}{3}k)} |\Omega|_\omega^{-1} dr^3 \wedge \star_4 \alpha, \\
    \star_7 (dr^3 \wedge dr^1 \wedge \alpha) &= (-1)^k 2^{(-2 + \frac{2}{3}k)} |\Omega|_\omega^{-1} dr^2 \wedge \star_4 \alpha, \\
    \star_7 (dr^2 \wedge dr^3 \wedge \alpha) &= (-1)^k 2^{\frac{2}{3}k} |\Omega|_\omega dr^1 \wedge \star_4 \alpha, \\
    \star_7 (dr^1 \wedge dr^2 \wedge dr^3 \wedge \alpha) &= 2^{(-\frac{4}{3} + \frac{2}{3}k)} |\Omega|_\omega^{-1} \star_4 \alpha.
\end{aligned}
\end{equation}

As before, we are interested in the evolution equations given by the Laplacian coflow with initial data \eqref{eqn-varphi-dim-2-3}. As such, we compute the Hodge Laplacian on $\psi$.

\begin{lem}
\label{lem-Hodge-Laplacian-psi-coflow-dim-2}
If $\varphi$ is the $G_2$-structure defined by \eqref{eqn-varphi-dim-2-3}, then
\begin{equation}
\label{eqn-Hodge-Laplacian-psi-coflow-dim-2}
    \Delta_d \psi = 2^{\frac{2}{3}} \cal{L}_{\nabla_{(g_4)} (\log |\Omega|_\omega)} \Big( 2^{-\frac{4}{3}} \vol_4 - 2^{-\frac{4}{3}} dr^2 \wedge dr^3 \wedge \omega + 2^{\frac{2}{3}}  dr^3 \wedge dr^1 \wedge \Re (\Omega) + 2^{\frac{2}{3}} dr^1 \wedge dr^2 \wedge \Im (\Omega) \Big).
\end{equation}
\end{lem}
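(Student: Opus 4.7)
The plan is to mimic the proofs of Lemma \ref{lem-Hodge-Laplacian-varphi-flow-dim-2} and Lemma \ref{lem-Hodge-Laplacian-varphi-flow-dim-3}, but on the coclosed side. Direct inspection of \eqref{eqn-psi-dim-2-3} using $d\omega = 0$, $d\Omega = 0$, and $d\vol_4 = 0$ (as a top form on $X^4$) shows that $d\psi = 0$, so $\Delta_d \psi = dd^* \psi$. Since $\psi$ is a $4$-form on the $7$-manifold $M^7$ and $\star_7^2 = \id$ on $3$-forms in this dimension (Euclidean signature), we have $d^* \psi = \star_7 d \star_7 \psi = \star_7 d\varphi$. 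The problem therefore reduces to computing $d \star_7 d\varphi$.

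To compute $d\varphi$, the closedness of $\omega$ and $\Omega$ forces all contributions to come from differentiating the scalar prefactors $|\Omega|_\omega$ and $|\Omega|_\omega^{-1}$. Writing $\alpha := d(\log |\Omega|_\omega)$ and carefully rearranging $dr^i$'s gives
\begin{equation*}
d\varphi = |\Omega|_\omega\, dr^1 \wedge dr^2 \wedge dr^3 \wedge \alpha - |\Omega|_\omega\, dr^1 \wedge \alpha \wedge \omega + \tfrac{1}{|\Omega|_\omega} dr^2 \wedge \alpha \wedge \Re(\Omega) + \tfrac{1}{|\Omega|_\omega} dr^3 \wedge \alpha \wedge \Im(\Omega).
\end{equation*}
Apply $\star_7$ term by term via \eqref{eqn-star_7-dim-2-3}. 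In contrast to the flow setting, the star identities here carry no $k$-dependent power of $|\Omega|_\omega$ (the $dr^2, dr^3$ and $X^4$ directions share the same scale in $g_7$), so the prefactors collapse immediately. Combined with \eqref{eqn-star_4}, \eqref{eqn-hook-star}, \eqref{eqn-grad-flat}, and the elementary identity $\star_4 (\alpha \wedge \theta) = Y \hook \theta$ valid whenever $\star_4 \theta = \theta$ and $Y^\flat = \alpha$ (applicable here with $\theta \in \{1, \omega, \Re(\Omega), \Im(\Omega)\}$), one obtains
\begin{equation*}
d^* \psi = \star_7 d\varphi = Y \hook \Big( \vol_4 - dr^2 \wedge dr^3 \wedge \omega + dr^3 \wedge dr^1 \wedge \Re(\Omega) + dr^1 \wedge dr^2 \wedge \Im(\Omega) \Big),
\end{equation*}
where $Y = \nabla_{(g_4)} (\log |\Omega|_\omega)$.

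Call the $4$-form in parentheses $\eta$. Each summand has its differential killed by $d\omega = d\Omega = d\vol_4 = 0$, so $d\eta = 0$. Cartan's magic formula \eqref{eqn-Cartan's-magic-formula} then gives $\Delta_d \psi = d d^* \psi = d (Y \hook \eta) = \cal{L}_Y \eta - Y \hook d\eta = \cal{L}_Y \eta$, as claimed. The main obstacle is sign bookkeeping---tracking anticommutation of $dr^i$'s with odd-degree forms on $X^4$, the $(-1)^k$ factors in \eqref{eqn-star_7-dim-2-3}, and the sign convention for $d^*$ in each degree---but no new ideas are needed beyond those already present in Lemmas \ref{lem-Hodge-Laplacian-varphi-flow-dim-2}--\ref{lem-Hodge-Laplacian-varphi-flow-dim-3}. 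A small conceptual point: the function in the Lie derivative here is $\log |\Omega|_\omega$ rather than $|\Omega|_\omega^{-2/3}$, which foreshadows the link between this coflow ansatz and the K\"ahler--Ricci flow rather than the MA$^{1/3}$ flow.
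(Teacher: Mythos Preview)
Your proposal is correct and follows essentially the same route as the paper's proof: reduce to $\Delta_d\psi = d\star_7 d\varphi$ via coclosedness, compute $d\varphi$ from the scalar prefactors, push through $\star_7$ using \eqref{eqn-star_7-dim-2-3} together with \eqref{eqn-hook-star}, and finish with Cartan's formula. One small slip: your stated identity $\star_4(\alpha\wedge\theta)=Y\hook\theta$ requires $\star_4\theta=\theta$, which fails for $\theta=1$; the first term actually uses $\star_4\alpha = Y\hook\star_4 1 = Y\hook\vol_4$, which is of course how the $\vol_4$ term arises in your final expression, so your computation is right even if the justification for that term is misphrased.
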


\begin{proof}
Since $\psi$ is closed, we have
\begin{equation*}
    \Delta_d \psi = dd^*\psi + d^*d\psi = dd^*\psi.
\end{equation*}
Now, $\psi$ is a $4$-form on the $7$-dimensional manifold $M^7$, and so
\begin{equation*}
    dd^*\psi = d\star_7 d\star_7 \psi = d\star_7 d\varphi.
\end{equation*}

We compute similarly to the proof of Lemma \ref{lem-Hodge-Laplacian-varphi-flow-dim-2} that
\begin{equation*}
\begin{aligned}
    d\varphi &= d \Big( -|\Omega|_\omega dr^1 \wedge dr^2 \wedge dr^3 + dr^1 \wedge |\Omega|_\omega \omega \\
    &\qquad \qquad + dr^2 \wedge \Re \Big( \frac{1}{|\Omega|_\omega} \Omega \Big) + dr^3 \wedge \Im \Big( \frac{1}{|\Omega|_\omega} \Omega \Big) \Big) \\
    &= -|\Omega|_\omega d(\log |\Omega|_\omega) \wedge dr^1 \wedge dr^2 \wedge dr^3 + |\Omega|_\omega d(\log |\Omega|_\omega) \wedge dr^1 \wedge \omega \\
    &\qquad - |\Omega|_\omega^{-1} d(\log |\Omega|_\omega) \wedge dr^2 \wedge \Re (\Omega) - |\Omega|_\omega^{-1} d(\log |\Omega|_\omega) \wedge dr^3 \wedge \Im (\Omega).
\end{aligned}
\end{equation*}

Again, using the identities \eqref{eqn-hook-star} and \eqref{eqn-grad-flat}, we get
\begin{equation*}
\begin{aligned}
    \star_7 d\varphi &= -|\Omega|_\omega \star_7 \Big[ d(\log |\Omega|_\omega) \wedge dr^1 \wedge dr^2 \wedge dr^3 \Big] + |\Omega|_\omega \star_7 \Big[ d(\log |\Omega|_\omega) \wedge dr^1 \wedge \omega \Big] \\
    &\qquad - |\Omega|_\omega^{-1} \star_7 \Big[ d(\log |\Omega|_\omega \wedge dr^2 \wedge \Re (\Omega) \Big]  - |\Omega|_\omega^{-1} \star_7 \Big[ d(\log |\Omega|_\omega) \wedge dr^3 \wedge \Im (\Omega) \Big] \\
    &= |\Omega|_\omega \Big( d(\log |\Omega|_\omega) \Big)^{\sharp_7} \hook \star_7 \Big[ dr^1 \wedge dr^2 \wedge dr^3 \Big] - |\Omega|_\omega \Big( d(\log |\Omega|_\omega) \Big)^{\sharp_7} \hook \star_7 \Big[ dr^1 \wedge \omega \Big] \\
    &\qquad + |\Omega|_\omega^{-1} \Big( d(\log |\Omega|_\omega) \Big)^{\sharp_7} \hook \star_7 \Big[ dr^2 \wedge \Re (\Omega) \Big] + |\Omega|_\omega^{-1} \Big( d(\log |\Omega|_\omega) \Big)^{\sharp_7} \hook \star_7 \Big[ dr^3 \wedge \Im (\Omega) \Big] \\
    &= |\Omega|_\omega \Big( 2^{\frac{2}{3}} \nabla_{(g_4)} (\log |\Omega|_\omega) \Big) \hook \Big[ 2^{-\frac{4}{3}} |\Omega|_\omega^{-1} \vol_4 \Big] - |\Omega|_\omega \Big( 2^{\frac{2}{3}} \nabla_{(g_4)} (\log |\Omega|_\omega) \Big) \hook \Big[ 2^{-\frac{4}{3}} |\Omega|_\omega^{-1} dr^2 \wedge dr^2 \wedge \omega \Big] \\
    &\qquad + |\Omega|_\omega^{-1} \Big( 2^{\frac{2}{3}} \nabla_{(g_4)} (\log |\Omega|_\omega) \Big) \hook \Big[ 2^{\frac{2}{3}} |\Omega|_\omega dr^3 \wedge dr^1 \wedge \Re (\Omega) \Big] \\
    &\qquad + |\Omega|_\omega^{-1} \Big( 2^{\frac{2}{3}} \nabla_{(g_4)} (\log |\Omega|_\omega) \Big) \hook \Big[ 2^{\frac{2}{3}} |\Omega|_\omega dr^1 \wedge dr^2 \wedge \Im (\Omega) \Big] \\
    &= 2^{\frac{2}{3}} \Big( \nabla_{(g_4)} (\log |\Omega|_\omega) \Big) \hook \Big[ 2^{-\frac{4}{3}} \vol_4 - 2^{-\frac{4}{3}} dr^2 \wedge dr^3 \wedge \omega + 2^{\frac{2}{3}} dr^3 \wedge dr^1 \wedge \Re (\Omega) + 2^{\frac{2}{3}} dr^1 \wedge dr^2 \wedge \Im (\Omega) \Big].
\end{aligned}
\end{equation*}

Cartan's magic formula \eqref{eqn-Cartan's-magic-formula} then yields
\begin{equation*}
    \Delta_d \psi = d\star_7 d\varphi = 2^{\frac{2}{3}} \cal{L}_{\nabla_{(g_4)} (\log |\Omega|_\omega)} \Big( 2^{-\frac{4}{3}} \vol_4 - 2^{-\frac{4}{3}} dr^2 \wedge dr^3 \wedge \omega + 2^{\frac{2}{3}}  dr^3 \wedge dr^1 \wedge \Re (\Omega) + 2^{\frac{2}{3}} dr^1 \wedge dr^2 \wedge \Im (\Omega) \Big),
\end{equation*}
as desired.
\end{proof}

Using the intermediate expressions in the above proof, we can compute the torsion forms of the $G_2$-structure $\varphi$.

\begin{lem}
\label{lem-torsion-coflow-dim-2}
If $\varphi$ is the $G_2$-structure defined by \eqref{eqn-varphi-dim-2-3}, then the torsion forms are given by
\begin{equation}
\label{eqn-torsion-coflow-dim-2}
\begin{aligned}
    &\tau_0 = 0,\quad \tau_1 = 0,\quad \tau_2 = 0, \\
    &\tau_3 = 2^{\frac{2}{3}} \Big( \nabla_{(g_4)} (\log |\Omega|_\omega) \Big) \hook \Big[ 2^{-\frac{4}{3}} \vol_4 - 2^{-\frac{4}{3}} dr^2 \wedge dr^3 \wedge \omega + 2^{\frac{2}{3}} dr^3 \wedge dr^1 \wedge \Re (\Omega) + 2^{\frac{2}{3}} dr^1 \wedge dr^2 \wedge \Im (\Omega) \Big].
\end{aligned}
\end{equation}
\end{lem}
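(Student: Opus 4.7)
The plan is to parallel Lemma \ref{lem-torsion-flow-dim-2}, with the role of $d\varphi = 0$ replaced by $d\psi = 0$. First, I would observe that since $\psi$ is closed by construction, the second equation in \eqref{eqn-torsion} reduces to $0 = 4\tau_1 \wedge \psi + \star\tau_2$. Under the $G_2$-decomposition $\Omega^5 = \Omega^5_7 \oplus \Omega^5_{14}$, the map $\alpha \mapsto \alpha \wedge \psi$ identifies $\Omega^1 \cong \Omega^5_7$ while $\star$ identifies $\Omega^2_{14} \cong \Omega^5_{14}$, so the two summands lie in distinct irreducible components. I would then conclude that each vanishes separately, forcing $\tau_1 = 0$ and $\tau_2 = 0$.

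For $\tau_0$, I would invoke the identity $\tau_0 = \frac{1}{7}\star_7(\varphi \wedge d\varphi)$ from \eqref{eqn-torsion-id} and show $\varphi \wedge d\varphi = 0$ by a degree-counting argument on $T^3 \times X^4$. Each term of $\varphi$ in \eqref{eqn-varphi-dim-2-3} contains either three or exactly one factor from $\{dr^1, dr^2, dr^3\}$; since $\omega$ and $\Omega$ are closed and $|\Omega|_\omega$ is a function on $X^4$, every term of $d\varphi$ likewise contains either three or one $dr^i$ factor. A nonzero $7$-form on $T^3 \times X^4$ must contain exactly three distinct $dr^i$'s together with a $4$-form on $X^4$, but the possible sums of $dr$-counts in a term of $\varphi \wedge d\varphi$ are $\{6, 4, 4, 2\}$. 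Counts exceeding $3$ vanish by repetition of $dr^i$ factors, while the sum-$2$ case would require a $5$-form on the $4$-dimensional $X^4$ and hence also vanishes; therefore $\varphi \wedge d\varphi = 0$ and $\tau_0 = 0$.

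With $\tau_0 = \tau_1 = 0$ in hand, the first equation of \eqref{eqn-torsion} collapses to $d\varphi = \star\tau_3$, so $\tau_3 = \star_7 d\varphi$, and the explicit expression for $\star_7 d\varphi$ produced during the proof of Lemma \ref{lem-Hodge-Laplacian-psi-coflow-dim-2} matches the formula claimed for $\tau_3$ verbatim. The one step requiring genuine care is the representation-theoretic separation of the two components of $d\psi$; the degree-counting for $\tau_0$ and the identification of $\tau_3$ amount to bookkeeping once the Hodge Laplacian computation is available.
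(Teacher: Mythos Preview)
Your proposal is correct and follows essentially the same approach as the paper: both argue $\tau_1=\tau_2=0$ from $d\psi=0$ via the $G_2$-type decomposition of \eqref{eqn-torsion}, both obtain $\tau_0=0$ from $\varphi\wedge d\varphi=0$ (the paper by direct expansion, you by the equivalent $dr^i$-degree count), and both read off $\tau_3=\star_7 d\varphi$ from the computation already done in Lemma~\ref{lem-Hodge-Laplacian-psi-coflow-dim-2}. The only difference is that you make the irreducible-component separation for $\tau_1,\tau_2$ explicit, whereas the paper leaves it implicit in the definition of the torsion forms.
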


\begin{proof}
Since $\varphi$ is coclosed, we have $d\psi = 0$. If follows from \eqref{eqn-torsion} that $\tau_1$ and $\tau_2$ both vanish. Next, using our previous expression for $d\varphi$, we have
\begin{equation*}
\begin{aligned}
    \varphi \wedge d\varphi &= \Big[ -|\Omega|_\omega dr^1 \wedge dr^2 \wedge dr^3 + dr^1 \wedge |\Omega|_\omega \omega + dr^2 \wedge \Re \Big( \frac{1}{|\Omega|_\omega} \Omega \Big) + dr^3 \wedge \Im \Big( \frac{1}{|\Omega|_\omega} \Omega \Big) \Big] \\
    &\qquad \wedge \Big[-|\Omega|_\omega d(\log |\Omega|_\omega) \wedge dr^1 \wedge dr^2 \wedge dr^3 + |\Omega|_\omega d(\log |\Omega|_\omega) \wedge dr^1 \wedge \omega \\
    &\qquad \qquad  - |\Omega|_\omega^{-1} d(\log |\Omega|_\omega) \wedge dr^2 \wedge \Re (\Omega) - |\Omega|_\omega^{-1} d(\log |\Omega|_\omega) \wedge dr^3 \wedge \Im (\Omega) \Big] \\
    &= 0
\end{aligned}
\end{equation*}
since $\log |\Omega|_\omega$ does not depend on the angle coordinates $r^1, r^2$, and $r^3$. From \eqref{eqn-torsion-id}, we see that $\tau_0 = 0$. Once again from \eqref{eqn-torsion}, we see that
\begin{equation*}
\begin{aligned}
    \tau_3 &= (\star_7)^2 \tau_3 = \star_7 d\varphi \\
    &=  2^{\frac{2}{3}} \Big( \nabla_{(g_4)} (\log |\Omega|_\omega) \Big) \hook \Big[ 2^{-\frac{4}{3}} \vol_4 - 2^{-\frac{4}{3}} dr^2 \wedge dr^3 \wedge \omega + 2^{\frac{2}{3}} dr^3 \wedge dr^1 \wedge \Re (\Omega) + 2^{\frac{2}{3}} dr^1 \wedge dr^2 \wedge \Im (\Omega) \Big].
\end{aligned}
\end{equation*}
\end{proof}

\subsection{The Hodge Laplacian of \texorpdfstring{$\psi$}{psi} on \texorpdfstring{$S^1 \times X^6$}{S1 x X6}}
\label{subsect-Hodge-Laplacian-psi-dim-3}

We return to the $3$-fold setting from \S \ref{subsect-CY-3-Folds}. By choosing $F = 1$ and $G = |\Omega|_\omega$ in \eqref{eqn-varphi-dim-3}, we get the $G_2$-structure on $M^7 = S^1 \times X^6$ by
\begin{equation}
\label{eqn-varphi-dim-3-3}
    \varphi = \Re \Big( \frac{1}{|\Omega|_\omega} \Omega \Big) - dr \wedge |\Omega|_\omega \omega.
\end{equation}

Using \eqref{eqn-g_7-dim-3}, \eqref{eqn-vol_7-dim-3}, and \eqref{eqn-psi-dim-3}, we have that the metric $g_7$ on $M^7$ is
\begin{equation}
\label{eqn-g_7-dim-3-3}
    g_7 = 4|\Omega|_\omega^2 dr^2 + \frac{1}{2} g_6,
\end{equation}
the volume form $\vol_7$ is
\begin{equation}
\label{eqn-vol_7-dim-3-3}
    \vol_7 = \frac{1}{4}|\Omega|_\omega dr \wedge \vol_6,
\end{equation}
and the dual $4$-form $\psi$ is
\begin{equation}
\label{eqn-psi-dim-3-3}
    \psi = - 2dr \wedge \Im(\Omega) - \frac{1}{4} \frac{1}{2} \omega^2.
\end{equation}

We also have the following expressions for the Hodge star operator $\star_7$ from \eqref{eqn-star_7-dim-3}:
\begin{equation}
\label{eqn-star_7-3}
\begin{aligned}
    \star_7 \beta &= (-1)^k 2^{(-2+k)} |\Omega|_\omega dr \wedge \star_6 \beta, \\
    \star_7 (dr \wedge \beta) &= 2^{(-4+k)} |\Omega|_\omega^{-1} \star_6 \beta,
\end{aligned}
\end{equation}
where $\beta \in \Omega^k(X^6)$ is a $k$-form on $X$.

Both $\omega$ and $\Omega$ are closed, so $\varphi$ is a coclosed $G_2$-structure. We now compute the Hodge Laplacian of $\psi$.

\begin{lem}
\label{lem-Hodge-Laplacian-psi-coflow-dim-3}
If $\varphi$ is the $G_2$-structure defined by \eqref{eqn-varphi-dim-3-3}, then
\begin{equation}
\label{eqn-Hodge-Laplacian-psi-coflow-dim-3}
    \Delta_d \psi = 2 \cal{L}_{\nabla (\log |\Omega|_\omega)} \Big( -2 dr \wedge \Im(\Omega) + \frac{1}{4} \frac{1}{2}\omega^2 \Big).
\end{equation}
\end{lem}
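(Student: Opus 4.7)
The strategy mirrors the $T^3 \times X^4$ analogue, namely Lemma \ref{lem-Hodge-Laplacian-psi-coflow-dim-2}. Since $\varphi$ is coclosed, $\psi$ is closed, so $\Delta_d \psi = d d^* \psi$. Because $\psi$ is a $4$-form on a $7$-manifold and $\star_7 \psi = \varphi$, the sign in $d^* = \pm \star_7 d \star_7$ works out so that $d^* \psi = \star_7 d \varphi$. Thus the proof reduces to computing $\star_7 d\varphi$ and recognising it as $\nabla(\log |\Omega|_\omega)\hook(-dr\wedge \Im(\Omega)+\frac{1}{2}\omega^2)$.

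First, I compute $d\varphi$. Using $d\Omega = 0$ and $d\omega = 0$,
\begin{equation*}
d\varphi = -\tfrac{1}{|\Omega|_\omega}\,d(\log|\Omega|_\omega)\wedge \Re(\Omega) + |\Omega|_\omega\, dr \wedge d(\log|\Omega|_\omega)\wedge \omega,
\end{equation*}
where the sign of the second term comes from $d(dr\wedge \alpha) = -dr \wedge d\alpha$.

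Next I apply $\star_7$ term by term using \eqref{eqn-star_7-3}. For the first summand I use $\star_7 \alpha = (-1)^k |\Omega|_\omega\, dr \wedge \star_6 \alpha$ with $k=4$, and then the identities \eqref{eqn-hook-star}, \eqref{eqn-grad-flat}, and $\star_6\Re(\Omega) = \Im(\Omega)$, to get
\begin{equation*}
\star_7\!\left[-\tfrac{1}{|\Omega|_\omega} d(\log|\Omega|_\omega)\wedge \Re(\Omega)\right] = dr \wedge \bigl(\nabla(\log|\Omega|_\omega)\hook \Im(\Omega)\bigr).
\end{equation*}
For the second summand I use $\star_7(dr\wedge \alpha) = \tfrac{1}{|\Omega|_\omega}\star_6\alpha$ (with $\alpha = d(\log|\Omega|_\omega)\wedge \omega$), together with $\star_6\omega = \tfrac{1}{2}\omega^2$ and \eqref{eqn-hook-star} to obtain
\begin{equation*}
\star_7\!\left[|\Omega|_\omega\, dr\wedge d(\log|\Omega|_\omega)\wedge \omega\right] = \nabla(\log|\Omega|_\omega)\hook \tfrac{1}{2}\omega^2.
\end{equation*}
Since $\nabla(\log|\Omega|_\omega)$ is tangent to $X^6$ it annihilates $dr$, so $\nabla(\log|\Omega|_\omega)\hook(-dr\wedge \Im(\Omega)) = dr\wedge(\nabla(\log|\Omega|_\omega)\hook\Im(\Omega))$. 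Summing the two contributions therefore yields
\begin{equation*}
\star_7 d\varphi = \nabla(\log|\Omega|_\omega)\hook\!\Bigl(-dr\wedge \Im(\Omega) + \tfrac{1}{2}\omega^2\Bigr).
\end{equation*}

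Finally, since $\omega$ and $\Omega$ are closed, the form $-dr\wedge \Im(\Omega) + \tfrac{1}{2}\omega^2$ is closed, so Cartan's magic formula \eqref{eqn-Cartan's-magic-formula} gives $d(Y\hook \beta) = \cal{L}_Y \beta$, and therefore
\begin{equation*}
\Delta_d \psi = d \star_7 d\varphi = \cal{L}_{\nabla(\log|\Omega|_\omega)}\!\Bigl(-dr\wedge \Im(\Omega) + \tfrac{1}{2}\omega^2\Bigr).
\end{equation*}
The only genuine obstacle is bookkeeping: one must choose consistent sign conventions for $d^*$ on a $4$-form on a $7$-manifold, for $\star_6$ on $3$-forms (where \eqref{eqn-hook-star} produces a $(-1)^3$), and for moving $dr$ past $1$-forms on $X^6$. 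Once these signs are tracked, the identity drops out directly.
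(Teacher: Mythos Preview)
Your proof is correct and follows essentially the same route as the paper: reduce $\Delta_d\psi$ to $d\star_7 d\varphi$, compute $d\varphi$, apply the Hodge star identities together with \eqref{eqn-hook-star} and \eqref{eqn-grad-flat} to write $\star_7 d\varphi$ as an interior product, and then invoke Cartan's formula. The only cosmetic difference is that you reduce to $\star_6$ via \eqref{eqn-star_7-3} before applying \eqref{eqn-hook-star}, whereas the paper applies \eqref{eqn-hook-star} directly on $M^7$ with $\sharp_7$; the computations are equivalent.
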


\begin{proof}
As before, we have
\begin{equation*}
    \Delta_d \psi = dd^* \psi = d \star_7 d \star_7 \psi = d \star_7 d\varphi.
\end{equation*}

We compute similarly to earlier that
\begin{equation*}
\begin{aligned}
    d\varphi &= d \Big( \Re\Big(\frac{1}{|\Omega|_\omega} \Omega\Big) - dr \wedge |\Omega|_\omega \omega\Big) \\
    &= - |\Omega|_\omega^{-1} d(\log |\Omega|_\omega) \wedge \Re (\Omega) - |\Omega|_\omega d(\log |\Omega|_\omega) \wedge dr \wedge \omega.
\end{aligned}
\end{equation*}

Taking the Hodge star of both sides and applying the identities \eqref{eqn-hook-star} and \eqref{eqn-grad-flat}, we get
\begin{equation*}
\begin{aligned}
    \star_7 d\varphi &= - |\Omega|_\omega^{-1} \star_7 \Big[ d(\log |\Omega|_\omega) \wedge \Re (\Omega) \Big] - |\Omega|_\omega \star_7 \Big[d(\log |\Omega|_\omega) \wedge dr \wedge \omega\Big] \\
    &=  |\Omega|_\omega^{-1} \Big(d(\log |\Omega|_\omega) \Big)^{\sharp_7} \hook \star_7 \Big[ \Re (\Omega) \Big] + |\Omega|_\omega \Big(d(\log |\Omega|_\omega) \Big)^{\sharp_7} \hook \star_7 \Big[ dr \wedge \omega\Big] \\
    &=  |\Omega|_\omega^{-1} \Big( 2 \nabla_{(g_6)} (\log |\Omega|_\omega) \Big) \hook \Big[ -2 |\Omega|_\omega dr \wedge \Im  (\Omega) \Big] + |\Omega|_\omega \Big( 2 \nabla_{(g_6)} (\log |\Omega|_\omega) \Big) \hook \Big[ \frac{1}{4} \frac{1}{2} \omega^2 \Big] \\
    &= 2\Big(\nabla_{(g_6)} (\log |\Omega|_\omega) \Big) \hook \Big[ -2dr \wedge \Im(\Omega) + \frac{1}{4} \frac{1}{2}\omega^2\Big].
\end{aligned}
\end{equation*}

Then applying Cartan's magic formula \eqref{eqn-Cartan's-magic-formula} and the fact that $\omega$ and $\Omega$ are closed, we get
\begin{equation*}
    \Delta_d \psi = d\star_7 d\varphi = 2\cal{L}_{\nabla_{(g_6)} (\log |\Omega|_\omega)} \Big( - 2dr \wedge \Im(\Omega) + \frac{1}{4} \frac{1}{2} \omega^2\Big)
\end{equation*}
as desired.
\end{proof}

Again, we can use the expressions obtained in the above proof to compute the torsion forms of $\varphi$.

\begin{lem}
\label{lem-torsion-coflow-dim-3}
If $\varphi$ is the $G_2$-structure defined by \eqref{eqn-varphi-dim-3-3}, then the torsion forms are given by
\begin{equation}
\label{eqn-torsion-coflow-dim-3}
    \tau_0 = 0,\quad \tau_1 = 0,\quad \tau_2 = 0, \tau_3 = 2 \Big(\nabla_{(g_6)} (\log |\Omega|_\omega)\Big) \hook \Big[ - 2dr \wedge \Im(\Omega) + \frac{1}{4} \frac{1}{2}\omega^2\Big].
\end{equation}
\end{lem}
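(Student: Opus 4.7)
The plan is to mirror the strategy used in Lemma \ref{lem-torsion-coflow-dim-2}, leveraging both the coclosedness of $\varphi$ and the computation of $d\varphi$ already carried out in the proof of Lemma \ref{lem-Hodge-Laplacian-psi-coflow-dim-3}. First, because $\omega$ and $\Omega$ are closed and $\psi$ is built only from $\omega, \omega^2$, and $\Im(\Omega)$ wedged with $dr$, we have $d\psi = 0$. Feeding this into the second torsion equation $d\psi = 4\tau_1 \wedge \psi + \star\tau_2$ and using the fact that $\tau_1 \wedge \psi$ and $\star \tau_2$ lie in distinct $G_2$-irreducible summands of $\Omega^5$, one concludes $\tau_1 = 0$ and $\tau_2 = 0$.

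Next I would handle $\tau_0$ using the identity $\tau_0 = \frac{1}{7} \star(\varphi \wedge d\varphi)$ from \eqref{eqn-torsion-id}. The expression for $d\varphi$ derived inside the proof of Lemma \ref{lem-Hodge-Laplacian-psi-coflow-dim-3} gives
\[
d\varphi = -\tfrac{1}{|\Omega|_\omega} d(\log|\Omega|_\omega) \wedge \Re(\Omega) - |\Omega|_\omega d(\log|\Omega|_\omega) \wedge dr \wedge \omega,
\]
and wedging with $\varphi = \Re(\Omega/|\Omega|_\omega) - dr \wedge |\Omega|_\omega \omega$ produces four summands. In the term involving $\Re(\Omega) \wedge \Re(\Omega)$, the extra factor $d(\log|\Omega|_\omega)$ pushes the form to degree $7$ in the $X^6$-directions, hence it vanishes. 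The two cross-terms feature the wedge $\omega \wedge \Re(\Omega)$, which vanishes by type considerations on the Calabi--Yau $3$-fold: $\omega \wedge \Omega$ has type $(4,1)$ on $X^6$ and thus is zero, so $\omega \wedge \Re(\Omega) = \tfrac{1}{2}\omega\wedge(\Omega+\bar\Omega)=0$. The final term contains $dr \wedge dr$ and hence vanishes. Therefore $\varphi \wedge d\varphi = 0$, giving $\tau_0 = 0$.

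Finally, with $\tau_0 = \tau_1 = 0$ the first equation of \eqref{eqn-torsion} collapses to $d\varphi = \star \tau_3$, so $\tau_3 = \star_7 d\varphi$ (using that $\star_7^2$ acts as the identity on $3$-forms in dimension $7$). But the Hodge star $\star_7 d\varphi$ was already computed en route to Lemma \ref{lem-Hodge-Laplacian-psi-coflow-dim-3}:
\[
\star_7 d\varphi = \bigl(\nabla_{(g_6)}(\log|\Omega|_\omega)\bigr) \hook \Bigl[-dr \wedge \Im(\Omega) + \tfrac{1}{2}\omega^2\Bigr],
\]
which is exactly the claimed expression for $\tau_3$.

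The only step that requires any genuine thought is verifying $\varphi \wedge d\varphi = 0$; once one notices that the three potentially nontrivial summands are killed respectively by a degree count on $X^6$ and by the vanishing of $\omega \wedge \Omega$ on a complex $3$-fold, the rest is bookkeeping that reuses formulas already established in the excerpt. Notably, unlike the dimension-$2$ counterpart (Lemma \ref{lem-torsion-coflow-dim-2}), where $\varphi \wedge d\varphi = 0$ follows because $\log|\Omega|_\omega$ is independent of the torus coordinates, here one must use the Calabi--Yau type constraints on $X^6$.
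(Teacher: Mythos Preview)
Your proof is correct and follows essentially the same approach as the paper: coclosedness gives $\tau_1=\tau_2=0$, the vanishing of $\varphi\wedge d\varphi$ gives $\tau_0=0$ via \eqref{eqn-torsion-id}, and then $\tau_3=\star_7 d\varphi$ is read off from the computation already done in Lemma~\ref{lem-Hodge-Laplacian-psi-coflow-dim-3}. The only cosmetic difference is that the paper kills the $\Re(\Omega)\wedge\Re(\Omega)$ term by noting that a $3$-form wedged with itself vanishes, whereas you use a degree count on $X^6$; both arguments are valid.
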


\begin{proof}
Since $\varphi$ is coclosed, we have $d\psi = 0$. If follows from \eqref{eqn-torsion} that $\tau_1$ and $\tau_2$ both vanish. Next, using our previous expression for $d\varphi$, we have
\begin{equation*}
\begin{aligned}
    \varphi \wedge d\varphi &= \Big[ \Re\Big(\frac{1}{|\Omega|_\omega} \Omega\Big) - dr \wedge |\Omega|_\omega \omega\Big] \wedge \Big[- |\Omega|_\omega^{-1} d(\log |\Omega|_\omega) \wedge \Re (\Omega) - |\Omega|_\omega d(\log |\Omega|_\omega) \wedge dr \wedge \omega\Big] = 0
\end{aligned}
\end{equation*}
since $\Re(\Omega) \wedge \omega = 0$ due to type considerations and $\Re (\Omega) \wedge \Re (\Omega) = 0$ as $\Re (\Omega)$ is a $3$-form. From \eqref{eqn-torsion-id}, we see that $\tau_0 = 0$. Once again from \eqref{eqn-torsion}, we see that
\begin{equation*}
    \tau_3 = (\star_7)^2 \tau_3 = \star_7 d\varphi = 2\Big(\nabla_{(g_6)} (\log |\Omega|_\omega) \Big) \hook \Big[- 2dr \wedge \Im(\Omega) + \frac{1}{4} \frac{1}{2} \omega^2 \Big].
\end{equation*}
\end{proof}

\subsection{The Evolution Equations}
\label{subsect-evolution-eqns-coflow}

Substituting ansatz \eqref{eqn-varphi-dim-2-3} into the coflow, we see that Lemma \ref{lem-Hodge-Laplacian-psi-coflow-dim-2} yields the evolution equation
\begin{equation}
\label{eqn-Laplacian-coflow-dim-2-3}
\begin{aligned}\
    &\frac{d}{dt} \Big( - 2^{-\frac{4}{3}} \vol_4 + 2^{-\frac{4}{3}} dr^2 \wedge dr^3 \wedge \omega + 2^{\frac{2}{3}} dr^3 \wedge dr^1 \wedge \Re (\Omega) + 2^{\frac{2}{3}} dr^1 \wedge dr^2 \wedge \Im (\Omega) \Big) \\
    &= 2^{\frac{2}{3}} \cal{L}_{\nabla_{(g_4)} (\log |\Omega|_\omega)} \Big( 2^{-\frac{4}{3}} \vol_4 - 2^{-\frac{4}{3}} dr^2 \wedge dr^3 \wedge \omega + 2^{\frac{2}{3}} dr^3 \wedge dr^1 \wedge \Re (\Omega) + 2^{\frac{2}{3}} dr^1 \wedge dr^2 \wedge \Im (\Omega) \Big)
\end{aligned}
\end{equation}
on $T^3 \times X^4$.

Analogously, substituting ansatz \eqref{eqn-varphi-dim-3-3} into the coflow and using Lemma \ref{lem-Hodge-Laplacian-psi-coflow-dim-3} gives us the evolution equation
\begin{equation}
\label{eqn-Laplacian-coflow-dim-3-3}
    \frac{d}{dt} \Big( - 2dr \wedge \Im(\Omega) - \frac{1}{4} \frac{1}{2} \omega^2 \Big) = 2 \cal{L}_{\nabla (\log |\Omega|_\omega)} \Big( - 2dr \wedge \Im(\Omega) + \frac{1}{4} \frac{1}{2} \omega^2 \Big)
\end{equation}
on $M^7 = S^1 \times X^6$.

As they were in \S \ref{subsect-evolution-eqns-flow}, the terms involving $\omega$ and $\Omega$ in both equations are similar. We again work on both cases simultaneously, letting $h$ denote either the metric $g_4$ on $X^4$ or the metric $g_6$ on $X^6$.

By considering the terms separately and noting time dependencies, we consider ans\"{a}tze $(\omega_t,\Omega_t)$ on $X^{2n}$ satisfying
\begin{equation}
\label{eqn-ansatz-omega-coflow}
    \frac{d \omega_t}{d t} = -K \cal{L}_{\nabla_{h_t}  (\log |\Omega_t|_{\omega_t})} \omega_t,
\end{equation}
\begin{equation}
\label{eqn-ansatz-Omega-coflow}
    \frac{d \Omega_t}{d t} = K \cal{L}_{\nabla_{h_t}  (\log |\Omega_t|_{\omega_t})} \Omega_t,
\end{equation}
where again $K$ is the constant $2^{\frac{n}{3}}$.

As with the coupled equations in \S \ref{subsect-evolution-eqns-flow}, solutions to \eqref{eqn-ansatz-omega-coflow} and \eqref{eqn-ansatz-Omega-coflow} will satisfy \eqref{eqn-Laplacian-coflow-dim-2-3} or \eqref{eqn-Laplacian-coflow-dim-3-3} as well. The metric $h_t$ along the flow is determined from the pair $(\omega_t,\Omega_t)$ in the way noted in Remark \ref{rmk-induced-metric-and-cplx-struct}. Similarly, we have the same compatibility considerations as in Remark \ref{rmk-compatibility} in order for the equations to be well-defined. Since the solutions to be presented are again obtained via pullback of compatible structures by diffeomorphisms, the compatibility conditions are satisfied for all time $t \in [0,\infty)$.

Before presenting the solution, we compute the Lie derivative term in \eqref{eqn-ansatz-omega-coflow}. By a similar computation as identity \eqref{eqn-Lie-deriv-MA1/3-flow}, we see that
\begin{equation*}
\begin{aligned}
    \cal{L}_{\nabla_h (\log |\Omega|_\omega)} \omega &= 2i \partial \overline{\partial} (\log |\Omega|_\omega).
\end{aligned}
\end{equation*}

Using \eqref{eqn-norm-Omega-dim-2} and \eqref{eqn-norm-Omega-dim-3} where $f$ is a nowhere vanishing holomorphic function, we have
\begin{equation*}
\begin{aligned}
    2i \partial_t \overline{\partial}_t (\log |\Omega_t|_{\omega_t}) &= i \partial_t \overline{\partial}_t \Big( \log \frac{|f_t|^2}{\det {(h_t)}_{p\overline{q}}} \Big) \\
    &= - i \partial_t \overline{\partial}_t (\log \det {(h_t)}_{p\overline{q}}) + i \partial_t \overline{\partial}_t (\log |f_t|^2) \\
    &= \Ric (\omega_t, J_t) + 0.
\end{aligned}
\end{equation*}

Thus, we get that
\begin{equation}
\label{eqn-Lie-deriv-KR-flow}
    \cal{L}_{\nabla_{h_t} (\log |\Omega_t|_{\omega_t})} \omega_t = \Ric(\omega_t, J_t),
\end{equation}
which is reminiscent of the K\"{a}hler--Ricci flow when combined with \eqref{eqn-ansatz-omega-coflow}.

As in \S \ref{subsect-evolution-eqns-flow}, the other Lie derivative term produces terms of type $(1,1)$ with respect to the complex structure $J_t$ on $X^4$ (respectively, we get terms of type $(2,1)$ on $X^6$). We again conclude that the complex structure $J_t$ must move with respect to time. In order to solve the coupled system, we will again pullback compatible structures by a moving family of diffeomorphisms.

\subsection{A Solution from the K\"{a}hler--Ricci Flow}
\label{subsect-soln-KR}

Let $\wtilde{\omega}_t$ be the unique smooth solution for the (rescaled) K\"{a}hler--Ricci flow
\begin{equation}
\label{eqn-KR-flow-3}
    \frac{d\wtilde{\omega}_t}{dt} = -2K\Ric(\wtilde{\omega}_t, J), \quad \wtilde{\omega}_0 = \omega
\end{equation}
on $(X^{2n},J,\Omega)$ where $K > 0$ is a constant. The existence of the holomorphic volume form $\Omega$ implies that $c_1(X^{2n}) = 0$, and so this solution exists for all time $t \in [0,\infty)$ \cite{Cao85} (or e.g. \cite{SW13,Tos18} for a modern reference on K\"ahler-Ricci flow). We recall that the complex structure $J$ on $X^{2n}$ remains fixed and we obtain a family of K\"{a}hler triples $(\wtilde{\omega}_t, J, \wtilde{h}_t)$ on $X^6$.

We can use the holomorphic volume form $\Omega$ and the smooth solution $\wtilde{\omega}_t$ to define a time-dependent vector field
\begin{equation}
\label{eqn-Y_t-coflow}
    Y_t = K\nabla_{\wtilde{h}_t} (\log |\Omega|_{\wtilde{\omega}_t}).
\end{equation}

Let $\Theta_t$ denote the $1$-parameter family of diffeomorphisms generated by the vector field $Y$. That is, set
\begin{equation}
\label{eqn-Theta_t-coflow}
    \frac{d}{dt} \Theta_t(p) = Y_t(\Theta_t(p)), \quad \Theta_0 = \id_{X^{2n}}.
\end{equation}

As we did in \S \ref{subsect-soln-MA1/3}, we can pull $\wtilde{\omega}_t$ and $\Omega$ back by this family and define
\begin{equation}
\label{eqn-omega_t-Omega_t-coflow}
    \omega_t = \Theta_t^* \wtilde{\omega}_t, \quad \Omega_t = \Theta_t^* \Omega.
\end{equation}

The same subtleties as in \S \ref{subsect-soln-MA1/3} apply here, however, by pulling the complex structure $J$ and the flowing metric $\wtilde{h}_t$ by the same family $\Theta_t$, we obtain another family of K\"{a}hler triples $(\omega_t, J_t,h_t)$ on $X^{2n}$ where $J_t = \Theta_t^* J$ and $h_t = \Theta_t^* \wtilde{h}_t$. In particular, $\Omega_t$ is a holomorphic $(n,0)$-form with respect to the moving complex structure $J_t$.

With this setup, we apply DeTurck's trick and the diffeomorphism invariance of the Ricci tensor in order to show that $(\omega_t,\Omega_t)$ is a solution to the evolution equations \eqref{eqn-ansatz-omega-coflow} and \eqref{eqn-ansatz-Omega-coflow}. As noted above, the pair $(\omega_t,\Omega_t)$ satisfy the required compatibility conditons for well-definition of the involved structures.

By DeTurck's trick and \eqref{eqn-KR-flow-3}, we can check that
\begin{equation*}
\begin{aligned}
    \frac{d\omega_t}{dt} &= \frac{d}{dt} (\Theta_t^* \wtilde{\omega}_t) \\
    &= \Theta_t^* (\cal{L}_{Y_t} \wtilde{\omega}_t) + \Theta_t^* \Big( \frac{d\wtilde{\omega}_t}{dt} \Big) \\
    &= \cal{L}_{(\Theta_t^{-1})_* Y_t} (\Theta_t^* \wtilde{\omega}_t) + \Theta_t^* \Big( -2K\Ric (\wtilde{\omega}_t,J) \Big) \\
    &= \cal{L}_{K(\Theta_t^{-1})_* [\nabla_{\wtilde{h}_t} (\log |\Omega|_{\wtilde{\omega}_t})} \omega_t - 2K\Ric (\Theta_t^* \wtilde{\omega}_t, \Theta_t^* J) \\
    &= K\cal{L}_{\nabla_{h_t} (\log |\Omega_t|_{\omega_t})} \omega_t - 2K\Ric (\omega_t, J_t).
\end{aligned}
\end{equation*}

Using \eqref{eqn-Lie-deriv-KR-flow}, we get that
\begin{equation*}
    \frac{d\omega_t}{dt} = -K\cal{L}_{\nabla_{h_t} (\log |\Omega_t|_{\omega_t})} \omega_t,
\end{equation*}
which is just \eqref{eqn-ansatz-omega-coflow}.

We can also see using a similar calculation in \S \ref{subsect-soln-MA1/3} that
\begin{equation*}
\begin{aligned}
    \frac{d\Omega_t}{dt} = \frac{d}{dt} (\Theta_t^* \Omega) = K\cal{L}_{\nabla_{h_t} (\log |\Omega_t|_{\omega_t})} \Omega_t,
\end{aligned}
\end{equation*}
which is \eqref{eqn-ansatz-Omega-coflow}.

As such, we get that if we start the $G_2$-Laplacian coflow with initial data of the form
\begin{equation}
\label{eqn-varphi-coflow-initial-2}
    \varphi = 
    \begin{cases}
        - |\Omega|_\omega dr^1 \wedge dr^2 \wedge dr^3 + dr^1 \wedge |\Omega|_\omega \omega \\
        \qquad + dr^2 \wedge \Re \Big( \frac{1}{|\Omega|_\omega} \Omega \Big) + dr^3 \wedge \Im \Big( \frac{1}{|\Omega|_\omega} \Omega \Big) &\text{ on } M^7 = T^3 \times X^4, \\[5pt]
        \Re \Big( \frac{1}{|\Omega|_\omega} \Omega \Big) - dr \wedge |\Omega|_\omega \omega &\text{ on } M^7 = S^1 \times X^6.
    \end{cases}
\end{equation}
then the $G_2$-structures induced by the K\"{a}hler--Ricci flow and given by
\begin{equation}
\label{eqn-ansatz-varphi-coflow-3}
    \varphi_t = 
    \begin{cases}
        - \Theta_t^* (|\Omega|_{\wtilde{\omega}_t}) dr^1 \wedge dr^2 \wedge dr^3 + dr^1 \wedge \Theta_t^* (|\Omega|_{\wtilde{\omega}_t} \wtilde{\omega}_t) \\
        \qquad + dr^2 \wedge \Re \Big( \Theta_t^* \Big[ \frac{1}{|\Omega|_{\wtilde{\omega}_t}} \Omega \Big] \Big) + dr^3 \wedge \Im \Big( \Theta_t^* \Big[ \frac{1}{|\Omega|_{\wtilde{\omega}_t}} \Omega \Big] \Big) &\text{ on } M^7 = T^3 \times X^4, \\[5pt]
        \Re \Big( \Theta_t^* \Big[ \frac{1}{|\Omega|_{\wtilde{\omega}_t}} \Omega \Big] \Omega \Big) - dr \wedge \Theta_t^* (|\Omega|_{\wtilde{\omega}_t} \wtilde{\omega}_t) &\text{ on } M^7 = S^1 \times X^6.
    \end{cases}
\end{equation}
solve the $G_2$-Laplacian coflow equation \eqref{eqn-Laplacian-coflow-2} for all time $t \in [0,\infty)$. 

\section{Convergence of Solutions}

For this section, we use the convention where $C, C_k$ and $\lambda, \lambda_k$ denote generic positive constants that may change from line to line but do not depend on $t$.

\subsection{Estimates for Complex Monge--Amp\`ere Flows}
\label{sect-anal-prelims}

We briefly review analytic estimates regarding complex Monge--Amp\`ere flows. These estimates can be found in joint work by the first-named author and X.-W. Zhang \cite{PZ19}. These bounds build on Yau's estimates \cite{Yau78} and recover the estimates of Cao \cite{Cao85} for the K\"ahler--Ricci flow on Calabi--Yau manifolds as a special case. For follow-up work in the non-K\"ahler case, see \cite{Sm20}.

For a compact K\"{a}hler manifold $X$ with K\"{a}hler form $\omega$, the main result in \cite{PZ19} states that there exists a smooth solution $u_t$ for all time $t \in [0,\infty)$ to the parabolic complex Monge--Amp\`{e}re equation
\begin{equation}
\label{eqn-MA-flow}
    \frac{du_t}{dt} = H \Big( e^{-a} \frac{\det(\omega + i\partial \overline{\partial} u_t)}{\det \omega} \Big), \quad \omega + i \partial \overline{\partial} u_t > 0, \quad u_0=0
\end{equation}
where $H \: \bb{R}^+ \rarr \bb{R}$ is a smooth function with $H'>0$ and $a \in C^\infty(X,\bb{R})$ is a smooth function.

When $X$ is a Calabi--Yau manifold, we can use the holomorphic volume form $\Omega$ to define the function $a$. In particular for a constant $K > 0$, we can choose $H(\rho) = 6K\rho^{\frac{1}{3}}$ and $a = 2 \log |\Omega|_\omega$ and see that there exists a smooth solution $u_t$ to the MA$^\frac{1}{3}$ flow
\begin{equation}
\label{eqn-MA1/3-flow-2}
    \frac{du_t}{dt} = 6K \Big(e^{-2 \log |\Omega|_{\omega}} \frac{\det (\omega + i \partial \overline{\partial} u_t)}{\det \omega} \Big)^\frac{1}{3}
\end{equation}
for all time $t \in [0,\infty)$. On the other hand, by choosing $H(\rho) = 2K \log \rho$ and $a = 2 \log |\Omega|_\omega$, we obtain
\begin{equation}
\label{eqn-KR-flow-2}
    \frac{du_t}{dt} = 2K \Big( \log \frac{\det (\omega + i \partial \overline{\partial} u_t)}{\det \omega} - \log |\Omega|_\omega^2 \Big)
\end{equation}
which is the K\"ahler-Ricci flow: by setting $\wtilde{\omega}_t = \omega + i \partial \overline{\partial} u_t$, we see that $\frac{d}{dt} \wtilde{\omega}_t = - 2K{\rm Ric}(\wtilde{\omega}_t,J)$. 

\begin{thm} 
  \cite{PZ19} Let $(X,J,\omega,\Omega)$ be a compact complex manifold with K\"ahler form $\omega$, associated metric $h$, and holomorphic volume form $\Omega$. Consider the complex Monge--Amp\`ere flow \eqref{eqn-MA-flow} with $a(z)=2 \log |\Omega|_\omega$. The flow exists for all time $t >0$, and the metrics
  \[
\wtilde{\omega}_t = \omega + i \partial \overline{\partial} u_t
  \]
  are a family of K\"ahler metrics on $X$ in the K\"ahler class $[\omega]$ converging in each $C^k(X,h)$ norm to a limiting metric $\omega_\CY \in [\omega]$ solving
  \[
\omega_\CY^n = c_0 |\Omega|_\omega^2 \omega^n,
  \]
  for some constant $c_0>0$ which can be determined by integration. It follows that the limit $\omega_\CY$ is the unique K\"ahler Ricci-flat metric in the K\"ahler class of $\omega$.

  The evolving metrics satisfy the uniform estimates
\begin{equation}
\label{eqn-unif-est-metrics-MA1/3-flow}
    C^{-1}h \leq \wtilde{h}_t \leq Ch,
  \end{equation}
  and
\begin{equation}
\label{eqn-unif-est-nabla-omega-MA1/3-flow}
    \Big|\nabla_{h}^k \wtilde{\omega}_t \Big|_h \leq C_k,
  \end{equation}
  where as usual $\wtilde{h}_t(JY,Z)= \wtilde{\omega}_t(Y,Z)$.
\end{thm}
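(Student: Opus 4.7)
The plan is to follow the standard playbook for parabolic complex Monge-Amp\`ere equations in the framework of \cite{PZ19}, specializing to $H(\rho) = 3 \rho^{1/3}$ and $a = 2 \log |\Omega|_\omega$, and then extract a limit along the flow. The key is to obtain uniform $C^k$ estimates for $u_t$ independent of $t$, after which Arzel\`a--Ascoli and a straightforward identification of the limiting equation do the rest.

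First, I would set up the a priori estimates in the order: $C^0$, then $\dot u_t$, then $C^2$, and finally higher derivatives. The $C^0$ estimate for $u_t$ is obtained by normalizing $u_t$ to have zero integral average against a suitable reference volume and then running a Moser iteration or Alexandroff-type argument, combined with the classical $L^\infty$ bound of Kolodziej applied to $\wtilde{\omega}_t = \omega + i\partial \bar\partial u_t$. The bound on $\dot u_t$ is obtained by differentiating \eqref{eqn-MA-flow} in $t$, producing a linear parabolic equation of the form $\partial_t (\dot u_t) = H' \wtilde{g}^{j\bar k} \partial_j \partial_{\bar k}(\dot u_t)$ after rewriting $\log \det(\omega + i\partial\bar\partial u_t)$, to which the maximum principle gives $|\dot u_t| \leq C$, together with the dissipation type inequality yielding $\dot u_t \to 0$ pointwise as $t \to \infty$. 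The normalization of $u_t$ with respect to $\dot u_t$ then controls $\sup u_t - \inf u_t$.

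The $C^2$ estimate, i.e. $C^{-1} h \leq \wtilde h_t \leq C h$, is the heart of the matter and is exactly where \cite{PZ19} upgrades Yau's argument \cite{Yau78}: one applies the maximum principle to a quantity of the form $e^{-A u_t} \operatorname{tr}_\omega \wtilde{\omega}_t$, using the previously obtained $C^0$ bounds to absorb error terms. The key subtlety flagged in the remark after the statement of Theorem~\ref{thm-Laplacian-flow-MA1/3} is that $A \mapsto (\det A)^{1/3}$ is not concave unless $n = 3$; in our setting $n = 2$ or $3$, so concavity holds when $n = 3$ and in the borderline $n = 2$ case the arguments in \cite{PZ19} (based on a direct computation exploiting $H' > 0$ rather than a full concavity assumption) still apply. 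This is the main obstacle compared to the standard K\"ahler--Ricci theory and is the place where one genuinely needs the framework of \cite{PZ19}.

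Once $\wtilde h_t$ is uniformly equivalent to $h$, higher regularity $|\nabla^k_h \wtilde\omega_t|_h \leq C_k$ follows by standard Schauder-type bootstrapping: one rewrites the flow, locally, as a uniformly parabolic equation and applies Krylov--Safonov/Schauder estimates iteratively; in the $n=3$ case one may alternatively use Evans--Krylov directly via concavity. With these uniform $C^k$ bounds in hand, Arzel\`a--Ascoli plus the fact that $\dot u_t \to 0$ produces a limit $u_\infty$ along subsequences for which $\wtilde\omega_\infty := \omega + i\partial \bar\partial u_\infty$ satisfies
\begin{equation*}
3\Bigl( e^{-2\log |\Omega|_\omega} \tfrac{\det \wtilde\omega_\infty}{\det \omega} \Bigr)^{1/3} = c,
\end{equation*}
for some constant $c$, equivalently $\wtilde\omega_\infty^n = c_0 |\Omega|_\omega^2 \omega^n$ with $c_0 = (c/3)^3$; integrating both sides and recalling $\int_X \wtilde\omega_\infty^n = \int_X \omega^n$ determines $c_0$. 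By Yau's theorem this limit coincides with the unique K\"ahler Ricci-flat metric $\omega_{\CY} \in [\omega]$, and uniqueness of the limit upgrades subsequential to full convergence in every $C^k(X,h)$.
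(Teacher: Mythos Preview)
The paper does not prove this theorem at all: it is quoted as a black-box result from \cite{PZ19} (note the citation in the theorem header and the sentence ``These estimates can be found in joint work by the first-named author and X.-W. Zhang \cite{PZ19}'' introducing \S\ref{sect-anal-prelims}). There is therefore nothing in the present paper to compare your proposal against. Your sketch is a reasonable outline of the strategy in \cite{PZ19} itself, but for the purposes of this paper the theorem is simply cited, not reproved.

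One small correction to your sketch: you say concavity of $A \mapsto (\det A)^{1/3}$ holds for $n=3$ with $n=2$ a ``borderline'' case requiring extra work. In fact, for $n\times n$ positive-definite matrices, $(\det A)^{p}$ is concave whenever $p \leq 1/n$, so $(\det A)^{1/3}$ is concave for both $n=2$ and $n=3$. The genuine point of \cite{PZ19} is that their $C^2$ estimate works for general $H$ with $H'>0$ \emph{without} any concavity hypothesis, which is why the theorem as stated holds for arbitrary complex dimension $n$, not only $n \leq 3$. Your restriction to $n=2,3$ is an artifact of the ambient $G_2$ setup and is not needed for this particular result.
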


We will also need exponential convergence of the flow. This is the estimates
\begin{equation}
\label{eqn-unif-est-nabla-omega-flow}
    \Big| \frac{d}{dt} \nabla_{h}^k \wtilde{\omega}_t \Big|_{h} \leq C_k e^{-\lambda_k t},
\end{equation}
for each $k \in \{0,1,2,\ldots\}$.

For the K\"ahler-Ricci flow, exponential convergence was obtained by Phong--Sturm \cite{PS06}. In our setup, this follows from estimate (4.104) and Proposition 3 in \cite{PZ19}, which reads
\[
\| \wtilde{u}_t \|_{C^k(X,h)} \leq C_k, \quad \Big|\frac{d}{dt} {\wtilde{u}}_t \Big| \leq C e^{-\lambda t},
\]
where $\wtilde{u}_t = u_t - \frac{1}{V} \int_X u_t \omega^n$ and $V = \int_X \omega^n$. To get decay of higher derivatives of $\wtilde{u}_t$, we integrate by parts. For example,
\[
\int_X \Big| \frac{d}{dt} \nabla {\wtilde{u}}_t \Big|^2 \omega^n = \int_X \bigg| \frac{d}{dt} {\wtilde{u}}_t \frac{d}{dt}\Delta {u}_t \,  \bigg| \omega^n \leq C \| \frac{d}{dt} {\wtilde{u}}_t \|_{L^\infty(X,h)}  \| \Delta H \|_{L^\infty(X,h)} \leq C e^{-\lambda t}
\]
since $|\Delta H| \leq \|u_t \|_{C^4(x,h)} \leq C$. Similarly, for any integer $k \geq 2$ then
\[
\int_X \Big| \frac{d}{dt} \nabla^k {\wtilde{u}}_t \Big|^2 \omega^n  \leq C \| \frac{d}{dt} {\wtilde{u}}_t \|_{L^\infty(X,h)} \| \nabla^{k+1} H \|_{L^\infty(X,h)} \leq C_k e^{-\lambda_k t}.
\]
The Sobolev embedding theorem implies that
\begin{equation*}
    \| \frac{d}{dt} \nabla^k {\wtilde{u}}_t \|_{L^\infty(X,h)} \leq C_k e^{- \lambda_k t}.
\end{equation*}

Since $\wtilde{\omega}_t = \omega + i \partial \overline{\partial} \wtilde{u}_t$, we obtain \eqref{eqn-unif-est-nabla-omega-flow}.

\subsection{Limit of Diffeomorphisms}
\label{sect-convergence}

Returning to $G_2$ geometry, it remains to show that our solutions $\varphi_t$ converge to a torsion-free $G_2$-structures $\varphi_\infty$ in both the flow and coflow cases. As both solutions involve pullbacks via a family of diffeomorphisms and their respective geometric flows have similar properties and estimates, we can tackle both cases simultaneously.

Recall that in either case:

$\bullet$ The flow is solved by the pair $(\omega_t,\Omega_t) = (\Theta_t^* \wtilde{\omega}_t, \Theta_t^* \Omega)$.

$\bullet$ The time-dependent family of K\"{a}hler triples $(\wtilde{\omega}_t, J, \wtilde{h}_t)$ on $X^{2n}$ come from a complex Monge--Amp\`ere flow (either MA$^\frac{1}{3}$ or K\"ahler-Ricci flow).

$\bullet$ $\wtilde{\omega}_t$ satisfies the estimates \S \ref{sect-anal-prelims} and converges to the unique K\"{a}hler Ricci-flat metric $\omega_\CY$ in the K\"{a}hler class $[\omega]$ in each $C^k(X^{2n},h)$-norm where $h$ is the original background metric.

$\bullet$ The diffeomorphisms solve $\frac{d}{dt} \Theta_t = Y_t$, where the time-dependent vector fields $Y_t$  were defined by \eqref{eqn-Y_t-flow} in the Laplacian flow case and by \eqref{eqn-Y_t-coflow} in the coflow case.

To prove convergence of $(\omega_t,\Omega_t)$, we use a method similar to that in \cite{LW19} to show that the maps $\Theta_t$ converge to a diffeomorphism $\Theta_\infty$.

Since along both the MA$^{\frac{1}{3}}$ and K\"{a}hler--Ricci flows the metric $\wtilde{\omega}_t$ converges to $\omega_\CY$ exponentially fast as $t \rarr \infty$ in each $C^k(X^{2n},h)$-norm and $|\Omega|_{\omega_\CY}$ is constant, it follows that $Y_t \rarr 0$ also converges exponentially fast as $t \rarr \infty$. Indeed
\[
|\nabla_{h}^k Y_t|_{h} = - \int_t^\infty \frac{d}{ds} |\nabla_{h}^k Y_s|_{h} ds \leq C \int_t^\infty e^{-\lambda s} ds,
\]
by \eqref{eqn-unif-est-nabla-omega-flow}, and for each $k \geq 0$ we have the estimates
\begin{equation}
\label{eqn-unif-est-nabla-Y}
    |\nabla_{h}^k Y_t|_{h} \leq C_k e^{-\lambda_k t}.
  \end{equation}

We now consider the family of diffeomorphisms $\Theta_t$. For every point $p \in X^{2n}$ and $t_1,t_2 \geq 0$, we have that
\begin{equation}
\label{eqn-Theta_t-path}
    [t_1,t_2] \rarr X^{2n} \: t \mapsto \Theta_t(p)
\end{equation}
defines a smooth path from $\Theta_{t_1}(p)$ to $\Theta_{t_2}(p)$. By \eqref{eqn-unif-est-nabla-Y} we can then see that
\begin{equation*}
\begin{aligned}
    d_{h}(\Theta_{t_1}(p), \Theta_{t_2}(p)) &\leq \int_{t_1}^{t_2} \Big| \frac{d}{dt} \Theta_t(p) \Big|_h dt \\
    &\leq \int_{t_1}^{t_2} |Y_t|_{h} dt \\
    &\leq C \int_{t_1}^{t_2} e^{-\lambda t} dt.
\end{aligned}
\end{equation*}
It follows that the maps $\Theta_t$ converge uniformly with respect to the metric $h$. Similarly, by \eqref{eqn-unif-est-nabla-Y} and the uniform estimates \eqref{eqn-unif-est-metrics-MA1/3-flow} and \eqref{eqn-unif-est-nabla-omega-MA1/3-flow} we have that $\Theta_t$ converges in each $C^k(X^{2n},h)$-norm. Thus we have that $\Theta_t$ converges to some limit map $\Theta_\infty \: X^{2n} \rarr X^{2n}$ as $t \rarr \infty$ in each $C^k(X^{2n},h)$-norm.

Next, we show that the pullback $\Theta_\infty^*$ is not degenerate. For this, we estimate
\begin{equation*}
\begin{aligned}
    \Big| \frac{d}{dt} \log \Big( \frac{\Omega_t \wedge \overline{\Omega_t}}{\Omega \wedge \overline{\Omega}} \Big) \Big| &= \Big| \frac{d}{dt} \Big( \log \frac{\Theta_t^* (\Omega \wedge \overline{\Omega})}{\Omega \wedge \overline{\Omega}} \Big) \Big| \\
    &= \Big| \frac{1}{\Theta_t^* (\Omega \wedge \overline{\Omega})} \frac{d}{dt} \Big( \Theta_t^* (\Omega \wedge \overline{\Omega}) \Big) \Big| \\
    &= \Big| \Theta_t^* \Big( \frac{\cal{L}_{Y_t}(\Omega \wedge \overline{\Omega})}{\Omega \wedge \overline{\Omega}}    \Big) \Big| \\
    &\leq \sup_X \Big| \Big( \frac{\cal{L}_{Y_t} (|\Omega|^2_\omega \vol_{2n}) }{|\Omega|^2_\omega \vol_{2n}}    \Big) \Big| \\
    &\leq \frac{| Y_t (|\Omega|^2_\omega)|}{|\Omega|^2_\omega} + \Big| \frac{d (Y_t \hook \vol_{2n})}{\vol_{2n}} \Big| \leq C e^{-\lambda t}
  \end{aligned}
\end{equation*}
by \eqref{eqn-unif-est-nabla-Y}. As such
\[
    \Big| \log \Big( \frac{\Omega_t \wedge \overline{\Omega_t}}{\Omega \wedge \overline{\Omega}} \Big) \Big| \leq \int_0^t \Big| \frac{d}{ds} \log \Big( \frac{\Omega_s \wedge \overline{\Omega_s}}{\Omega \wedge \overline{\Omega}} \Big) \Big| ds \leq C \int_0^t e^{-\lambda s} ds \leq C
    \]
is bounded independently of $t$. So it follows that
\begin{equation}
\label{eqn-unif-est-Omega_t-wedge-Omega_t-bar}
    C^{-1} \Omega \wedge \overline{\Omega} \leq \Theta_t^* (\Omega \wedge \overline{\Omega}) \leq C \Omega \wedge \overline{\Omega},
\end{equation}
and hence the pullback is uniformly non-degenerate. We see that $\det (\Theta_t)_*$ is bounded independently of $t$ and this estimate can be passed to the limit map $\Theta_\infty$.

By the inverse function theorem, $\Theta_\infty$ is a local diffeomorphism. Since $\Theta_0 = \id_{X^{2n}}$ is the identity map and each $\Theta_t$ is a diffeomorphism which is isotopic to the identity map, we have that $\Theta_\infty$ is a surjective local diffeomorphism homotopic to the identity. Further, $X^{2n}$ is compact, and so $\Theta_\infty$ is a covering map. Lastly, $\Theta_\infty$ is homotopic to the identity map and so it must have degree $1$ and is thus injective. We conclude that $\Theta_\infty$ is indeed a diffeomorphism.

Finally, we have that $\Theta_t \rarr \Theta_\infty$ and $\wtilde{\omega}_t \rarr \omega_\CY$ with respect to the background metric $h$. It follows that $\Theta_t^* \wtilde{\omega}_t \rarr \Theta_\infty^* \omega_\CY$ and $\Theta_t^* \Omega \rarr \Theta_\infty^* \Omega$ also with respect to the background metric $h$. As such, we have that
\begin{equation}
\label{eqn-varphi-flow-stationary-2}
    \varphi_t \rarr \varphi_{\infty} = 
    \begin{cases}
        - dr^1 \wedge dr^2 \wedge dr^3 + dr^1 \wedge \Theta_\infty^* \omega_{{\CY}} \\
        \qquad + dr^2 \wedge \Re (\Theta_\infty^*\Omega) + dr^3 \wedge \Im (\Theta_\infty^*\Omega) &\text{ on } T^3 \times X^4 \\[5pt]
        \Re( \Theta_\infty^*\Omega) - dr \wedge \Theta_\infty^* \omega_{{\CY}} &\text{ on } S^1 \times X^6,
    \end{cases}
\end{equation}
in the Laplacian flow case
\begin{equation}
\label{eqn-varphi-coflow-stationary-2}
    \varphi_t \rarr \varphi_{\infty} = 
    \begin{cases}
        - \Theta_\infty^* (|\Omega|_{\omega_\CY}) dr^1 \wedge dr^2 \wedge dr^3 + dr^1 \wedge \Theta_\infty^* (|\Omega|_{\omega_\CY} \omega_\CY) \\
        \qquad + dr^2 \wedge \Re \Big( \Theta_\infty^* \Big[\frac{1}{|\Omega|_{\omega_{\CY}}} \Omega\Big] \Big) + dr^3 \wedge \Im \Big( \Theta_\infty^* \Big[\frac{1}{|\Omega|_{\omega_{\CY}}} \Omega\Big] \Big) &\text{ on } T^3 \times X^4, \\[5pt]
        \Re \Big( \Theta_\infty^* \Big[\frac{1}{|\Omega|_{\omega_{\CY}}} \Omega\Big] \Big) -  dr \wedge \Theta_\infty^* (|\Omega|_{\omega_{\CY}} \omega_{{\CY}}) &\text{ on } S^1 \times X^6,
    \end{cases}
\end{equation}
in the Laplacian coflow case.

As noted earlier, $|\Omega|_{\omega_\CY}$ is constant, so by Lemmas \ref{lem-torsion-flow-dim-2}, \ref{lem-torsion-flow-dim-3}, \ref{lem-torsion-coflow-dim-2}, and \ref{lem-torsion-coflow-dim-3}, $\varphi_\infty$ defines a torsion-free $G_2$-structure in all cases.


\end{document}